\definecolor{darkgrey}{rgb}{0.4,0.4,0.5}
\newtheorem{theorem}{Theorem}[section]
\newtheorem{lemma}[theorem]{Lemma}
\newtheorem{question}[theorem]{Question}
\newtheorem{conjecture}[theorem]{Conjecture}
\newtheorem{corollary}[theorem]{Corollary}
\newtheorem{proposition}[theorem]{Proposition}
\newtheorem{definition}[theorem]{Definition}
\newtheorem{proposition-definition}[theorem]{Proposition-Definition}
\theoremstyle{remark}
\newtheorem{remark}[theorem]{Remark}
\numberwithin{equation}{section}
\newcommand\EatDot[1]{}
\newcommand{\unr}{{\mathrm{unr}}}
\newcommand{\Gal}{{\mathrm{Gal}}}
\newcommand{\Coker}{{\mathrm{Coker}}}
\newcommand{\cris}{{\mathrm{cris}}}
\newcommand{\Proj}{\mathrm{Proj}}
\newcommand{\lalg}{\mathrm{lalg}}
\newcommand{\sm}{\mathrm{sm}}
\newcommand{\an}{\mathrm{an}}
\newcommand{\la}{\mathrm{la}}
\newcommand{\Sym}{\mathrm{Sym}}
\newcommand{\Hom}{\mathrm{Hom}}
\newcommand{\Lie}{\mathrm{Lie}}
\newcommand{\Ind}{\mathrm{Ind}}
\newcommand{\Q}{{\mathbb Q}}
\newcommand{\Z}{{\mathbb Z}}
\newcommand{\Ext}{\mathrm{Ext}}
\newcommand{\Spec}{\mathrm{Spec}}
\newcommand{\irr}{\mathrm{irr}}
\newcommand{\OO}{{\mathcal O}}
\newcommand{\XX}{\mathds{X}}
\newcommand{\GG}{\mathds{G}}
\DeclareMathOperator{\val}{val}
\newcommand{\charvar}{\widehat{\mathscr{T}}}
\newcommand{\robba}{\mathscr{R}}
\newcommand{\scriptS}{\mathscr{S}}
\newcommand{\scriptL}{\mathscr{L}}
\newcommand{\frob}{\varphi}
\newcommand{\phigamma}{(\varphi, \Gamma)}
\newcommand{\rig}{\mathrm{rig}}
\newcommand{\Ddaggerrig}{D_{\rig}^{\dagger}}
\newcommand{\scriptC}{\mathcal{C}}
\newcommand{\scriptD}{\mathcal{D}}
\newcommand{\CA}{\mathcal{C}^{\an}}
\newcommand{\CAXU}{\CA(\XX,U)}
\newcommand{\CAGU}{\CA(\GG,U)}
\newcommand{\UGan}{U_{\GG-\an}}
\newcommand{\Gbf}{\mathbf{G}}
\newcommand{\Gnnot}{\GG(n)^{\circ}}
\newcommand{\DanGnnotL}{\mathcal{D}^{\an}(\Gnnot,L)}
\newcommand{\UU}{\mathds{U}}
\newcommand{\frakRKKnot}{\mathfrak{R}_{K/K_0}}
\newcommand{\resRKKnot}{\mathfrak{R}_{K/K_0}}
\newcommand*\closure[1]{\overline{#1}} 
\newcommand{\absolute}[1]{\left\lvert#1\right\rvert}
\begin{document}
\title[Rigid analytic geometry and $p$-adic Langlands]{Rigid analytic vectors of crystalline representations arising in $p$-adic Langlands}

\author{Jishnu Ray}
\address{Department of Mathematics, The University of British Columbia}
\curraddr{Room 121, 1984 Mathematics Road\\
	Vancouver, BC\\
	Canada V6T 1Z2}
\email{jishnuray1992@gmail.com}

\thanks{This work is supported by PIMS-CNRS postdoctoral research grant from the University of British Columbia.\\
\textit{Keywords:} rigid analytic geometry, $p$-adic Langlands correspondence.\\
\textit{AMS subject classifications:} 11F70,  11F85, 14G22 (Primary) 22E35, 22E50, 11F80 (Secondary).}

\begin{abstract}
Let $\mathbf{B}(V)$ be the admissible unitary $GL_2(\Q_p)$-representation associated to two dimensional crystalline Galois representation $V$ by $p$-adic Langlands constructed by Breuil. Berger and Breuil conjectured an explicit description of the locally analytic vectors $\mathbf{B}(V)_{\la}$ of  $\mathbf{B}(V)$ which is now proved by Liu. Emerton recently studied $p$-adic representations from the viewpoint of rigid analytic geometry.  In this article, we consider certain rigid analytic subgroups of $GL(2)$ and give an explicit description of the rigid analytic vectors in $\mathbf{B}(V)_{\la}$. In particular, we show the existence of rigid analytic vectors inside $\mathbf{B}(V)_{\la}$ and prove that its non-null. This gives us a rigid analytic representation (in the sense of Emerton) lying inside the locally analytic representation $\mathbf{B}(V)_{\la}$.  
\end{abstract}

\maketitle


\section{Introduction}
Let $p>2$ be a prime and $L$ be a finite extension over $\Q_p$. Breuil initiated the construction of $p$-adic local Langlands for $GL_2(\Q_p)$. To any two dimensional, crystalline Galois $L$-representation  $V$ he attached a $p$-adic admissible unitary representation of  $GL_2(\Q_p)$ which we denote by $\mathbf{B}(V)$. Further, the main work of Colmez was to extend this correspondence to any two dimensional $L$-representation of the absolute Galois group of $\Q_p$. When $V$ is crystalline, Berger and Breuil formulated a conjecture on the locally analytic vectors $\mathbf{B}(V)_{\la}$ of the $L$-Banach representation $\mathbf{B}(V)$. The representation $\mathbf{B}(V)_{\la}$ is an admissible locally analytic representation in the sense of Schneider and Teitelbaum \cite{Algebras}. This conjecture was proved by Liu in \cite[Thm. 4.1]{Liu}. Colmez and Dospinescu generalized this to all $GL_2(\Q_p)$-representations appearing in the $p$-adic Langlands \cite{ColDos}. In \cite[Sec. 3.3]{Emertonbook}, Emerton studies rigid analytic vectors of $p$-adic representations from the viewpoint of rigid analytic geometry \cite{Bosch1}. Then, he builds up a notion of an admissible rigid analytic representation which is completely analogous to the original construction of Schneider and Teitelbaum for an admissible locally analytic representation.

Let $I(1)$ be the pro-$p$ Iwahori subgroup of $GL_2(\Z_p)$, that is matrices of the form 
$\left( \begin{matrix} 1+\Z_p & p\Z_p \\  \Z_p & 1+\Z_p \end{matrix} \right).$  Let $Q$ be the intersection of the standard upper triangular Borel subgroup of $GL_2(\Z_p)$ with the pro-$p$ Iwahori.  The pro-$p$ Iwahori $I(1)$ can be considered as $\Z_p$-points of a rigid analytic group. 
 In \cite[Sec. 3.3]{Clozel_global_II_published}, Laurent Clozel showed the existence of $I(1)$-rigid analytic vectors inside the principal series representation, which is the representation induced by a rigid analytic character $\chi$ of the Borel subgroup $Q$ to $I(1)$. This gave him an admissible rigid analytic representation, in the sense of Emerton, of the rigid analytic pro-$p$ Iwahori subgroup. Furthermore, he found conditions on the character $\chi$ such that his rigid analytic principal series is irreducible \cite[Thm. 3.7]{Clozel_global_II_published}. This result of Clozel is later generalized by the author of this article for $GL_n(\Z_p)$ \cite[Thms. 3.8, 3.9]{Rayglobal}. Clozel once asked the author the following question.
 \begin{question}[by Clozel]
 	Start with a two dimensional, Galois $L$-representation  $V$. By work of Fontaine, we can attach to it a rank two  étale $(\varphi, \Gamma)$-module $D$ over the Robba ring $\robba_L$ (cf. Section \ref{sec:trianguline}). Then Colmez and others have constructed the corresponding $GL_2(\Q_p)$-representation $\Pi(D)$ which is of type $\mathbf{\Pi}(D) \cong ( D \boxtimes \mathbf{P}^1)/(D^\natural \boxtimes \mathbf{P}^1) $
 	(cf. Section \ref{sec:padicLang}) and set the $p$-adic Langlands as $V \rightarrow \Pi(D)$. 
 	
 	Does there exist rigid analytic vectors inside $\Pi(D)?$ If so, how to compute them and construct a rigid analytic representation, in the sense of Emerton?
 \end{question}
This question is answered positively in this article for crystalline representations $V$.  When $V$ is not crystalline, this question is still open. The rigid analytic groups we will consider are the so-called `good open congruence subgroups'   which have  the property that $\Gnnot (\mathbb{C}_p)= 1+\varpi^n(\mathfrak{m}_{\mathbb{C}_p})$ where $\varpi^n \in \mathcal{O}_{\mathbb{C}_p}$ is a uniformizer  and $\mathfrak{m}_{\mathbb{C}_p}$ is the maximal ideal. The group $\Gnnot$ is a rigid analytic subgroup of $GL_2^{\rig}$ which is a rigid analytic group associated to the algebraic group $GL_2$.  For $m$ a positive integer, let $\GG(m)$ be the rigid analytic group whose $\Z_p$-points is the $m$-th principal congruence subgroup of $GL_2(\Z_p)$. Then $\Gnnot$ is the  rigid analytic group which is the union of the rigid analytic groups $\GG(m)$ for all $m>n$.

When $V$ is crystalline, the theme of the paper is to  compute  the subspace $\big(\mathbf{B}(V)_{\la}\big)_{\Gnnot - \an}$ of $\Gnnot$-analytic vectors of the locally analytic $GL_2(\Q_p)$-representation $ \mathbf{B}(V)_{\la}$ associated to $V$ by $p$-adic Langlands of Berger and Breuil. We give an explicit presentation of $\big(\mathbf{B}(V)_{\la}\big)_{\Gnnot - \an}$ and show that its non-null. 

Our main Theorem is the following.
\begin{theorem}[see Thm. \ref{thm:mainAnalytic}]
	Suppose $V=V(\alpha,\beta)$ is crystalline, irreducible, Frobenius semisimple Galois representation with distinct Hodge-Tate weights. Let $\mathbf{B}(V)_{\la}$ be the locally analytic vectors of the $GL_2(\Q_p)$-representation  $\mathbf{B}(V)$ associated to $V$ via $p$-adic local Langlands constructed by Berger and Breuil. Then,
	$$\big(\mathbf{B}(V)_{\la}\big)_{\Gnnot-\an} \cong \Coker\Big( \pi(\beta)_{\Gnnot-\an} \hookrightarrow LA(\alpha)_{\Gnnot-\an} \oplus LA(\beta)_{\Gnnot-\an}  \Big)  $$
	is a rigid-analytic $\Gnnot(\Z_p)$-representation. Furthermore,
	\begin{enumerate}[(i)]
		\item $\pi(\beta)_{\Gnnot-\an} \cong \Sym^{k-2}L^2 \otimes \varprojlim_{m>n}C(\beta)_{m}$,
		\item   $ LA(\alpha)_{\Gnnot-\an} \cong GA(\alpha)$  and $ LA(\beta)_{\Gnnot-\an} \cong GA(\beta)$.
	\end{enumerate}
	In particular, it follows that $\big(\mathbf{B}(V)_{\la}\big)_{\Gnnot-\an} \neq 0$.
\end{theorem}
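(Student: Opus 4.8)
The plan is to reduce everything to the Berger--Breuil description of $\mathbf{B}(V)_{\la}$ as the cokernel of an explicit map of locally analytic principal series (this is Liu's theorem), and then to show that the functor of $\Gnnot$-analytic vectors is exact enough on the relevant complex to commute with that cokernel. Concretely, I would first recall from Section~\ref{sec:padicLang} that, for $V=V(\alpha,\beta)$ crystalline irreducible with distinct Hodge--Tate weights, Liu's theorem gives a $GL_2(\Q_p)$-equivariant exact sequence $0 \to \pi(\beta) \to LA(\alpha)\oplus LA(\beta) \to \mathbf{B}(V)_{\la} \to 0$, where $LA(\alpha)$, $LA(\beta)$ are the locally analytic principal series attached to the two refinements and $\pi(\beta)$ is the algebraic (finite-length) constituent $\Sym^{k-2}L^2\otimes(\text{something})$ embedded diagonally via the two intertwining maps. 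Restricting to the compact group $\GG(m)(\Z_p)$ (for $m>n$) and passing to the limit over $m$, the whole problem becomes local: understand $\Gnnot$-analytic vectors of an induced representation from the Borel.

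Next I would carry out the computation of each term separately. For $LA(\alpha)$ and $LA(\beta)$: these are locally analytic inductions from a Borel character, and by the Iwahori/Bruhat decomposition they are, as representations of the relevant congruence subgroup, identified with spaces of locally analytic functions on $\Z_p$ (or $\mathbb{P}^1(\Q_p)$) twisted by the character. The $\Gnnot$-analytic vectors of such a space are exactly the functions that extend to rigid analytic functions on the wide-open disc of radius governed by $n$; this is where I invoke the "good open congruence subgroup" formalism and Emerton's identification of $(\,\cdot\,)_{\GG-\an}$ with sections over the corresponding rigid analytic space. This should yield $LA(\alpha)_{\Gnnot-\an}\cong GA(\alpha)$ and $LA(\beta)_{\Gnnot-\an}\cong GA(\beta)$, the "globally analytic" principal series (the $GA$ notation presumably being defined earlier, mirroring Clozel's and the author's earlier construction of rigid analytic principal series for the pro-$p$ Iwahori and for $GL_n$). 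For $\pi(\beta)$: since $\Sym^{k-2}L^2$ is finite-dimensional and algebraic, its $\Gnnot$-analytic vectors are all of it, and the remaining factor is a compact induction whose $\Gnnot$-analytic vectors form the inverse limit $\varprojlim_{m>n}C(\beta)_m$; so $\pi(\beta)_{\Gnnot-\an}\cong \Sym^{k-2}L^2\otimes\varprojlim_{m>n}C(\beta)_m$, giving part (i) and the $GA$-identifications of part (ii).

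The exactness step is the crux. The functor $M\mapsto M_{\Gnnot-\an}$ is left-exact in general but not obviously right-exact; I would argue it is exact on the specific sequence $0\to\pi(\beta)\to LA(\alpha)\oplus LA(\beta)\to\mathbf{B}(V)_{\la}\to 0$ by using that $\mathbf{B}(V)_{\la}$ is an admissible locally analytic representation (Schneider--Teitelbaum), hence its analytic-vector filtration behaves well, together with the fact that $\Gnnot$ is a "good" rigid analytic group so that $(\,\cdot\,)_{\GG(m)-\an}$ is computed by a flat/coadmissible base change on the Fr\'echet--Stein side; taking $\varinjlim$ over $m$ (which is exact) then preserves the surjection. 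Equivalently, one can check directly that every $\Gnnot$-analytic vector of $\mathbf{B}(V)_{\la}$ lifts to a $\Gnnot$-analytic vector of $LA(\alpha)\oplus LA(\beta)$ by unwinding Liu's explicit splitting at the level of power-series expansions on the open disc. Once the cokernel description is established, non-vanishing is immediate: the map $\pi(\beta)_{\Gnnot-\an}\hookrightarrow GA(\alpha)\oplus GA(\beta)$ cannot be surjective because, e.g., the "globally analytic" principal series $GA(\alpha)$ is infinite-dimensional over $L$ in a way that the image of $\pi(\beta)$ (whose non-algebraic factor is a single $\varprojlim$ of the $C(\beta)_m$) cannot fill, so the cokernel is nonzero; hence $\big(\mathbf{B}(V)_{\la}\big)_{\Gnnot-\an}\neq 0$.

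I expect the main obstacle to be precisely the right-exactness of $(\,\cdot\,)_{\Gnnot-\an}$ on this sequence: locally analytic vectors and rigid analytic vectors are notoriously non-exact functors, so the argument must exploit something special --- either the admissibility of $\mathbf{B}(V)_{\la}$, or the concrete geometry of Liu's intertwining map (a finite-dimensional obstruction coming from $\pi(\beta)$), or Emerton's coadmissibility machinery for good groups --- to push the surjectivity through the analytic-vectors functor and past the inverse limit over $m>n$. A secondary, more technical point will be matching the radii: one must check that the "good open congruence subgroup" $\Gnnot$ with $\Gnnot(\mathbb{C}_p)=1+\varpi^n\mathfrak{m}_{\mathbb{C}_p}$ corresponds to exactly the disc on which the principal-series functions in $GA(\alpha)$, $GA(\beta)$ converge, so that the identifications in (ii) are isomorphisms and not merely inclusions.
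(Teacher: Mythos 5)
Your overall plan matches the paper: start from Liu's description of $\mathbf{B}(V)_{\la}$ as an amalgamated sum $LA(\alpha)\oplus_{\pi(\beta)}LA(\beta)$, compute $\Gnnot$-analytic vectors of the principal series via Iwasawa and Bruhat--Tits decompositions and of $\pi(\beta)$ via its locally algebraic structure, then commute the analytic-vectors functor past the cokernel. You have also correctly located the crux: right-exactness of $(\cdot)_{\Gnnot-\an}$. However, your argument for that step has a concrete error. You claim $(\cdot)_{\GG(m)-\an}$ is well-behaved by a ``flat/coadmissible base change'' and that passing to the limit over $m$ is a $\varinjlim$, hence exact. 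Both halves of this fail: the paper's Section~\ref{sec: whySigmaAffinoid} notes explicitly that exactness of $U\mapsto U_{\GG(m)-\an}$ for the \emph{affinoid} groups is open (their distribution algebras are not even noetherian, and do not respect products), and since $\Gnnot$ is $\sigma$-affinoid the definition gives $\UGan=\varprojlim_m U_{\GG(m)-\an}$, a \emph{projective} limit, which is only left exact. The paper avoids both problems by working directly with the wide-open group $\Gnnot$, whose distribution algebra $\DanGnnotL$ is a coherent inductive limit of noetherian Banach algebras, and by quoting the exactness Proposition~\ref{prop:exactness} (from the appendix of \cite{EmertonJac}) for the $\Gnnot$-analytic-vectors functor on admissible locally analytic $GL_2(\Z_p)$-representations. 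Concretely, the paper dualizes Liu's identification to obtain $\big(LA(\alpha)\oplus_{\pi(\beta)}LA(\beta)\big)^\ast \cong \ker\big(LA(\alpha)^\ast\oplus LA(\beta)^\ast \to \pi(\beta)^\ast\big)$, uses reflexivity of compact-type spaces to dualize back, and only then applies Proposition~\ref{prop:exactness}; your proposed ``direct lifting'' alternative is not developed and would be much harder.

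Two further remarks. First, the identifications $LA(\alpha)_{\Gnnot-\an}\cong GA(\alpha)$ and $LA(\beta)_{\Gnnot-\an}\cong GA(\beta)$ are not mere bookkeeping: the paper's Proposition~\ref{prop:UmAnalytic} carries out explicit power-series estimates against Lemma~\ref{lem:action} to show that every $f\in\scriptC^\la(U,L)\cap\scriptC^\an(\UU(m),L)$ has rigid-analytic orbit map under the full $\GG(m)$, not just under the unipotent $\UU(m)$ whose orbit is manifestly a translation; you gesture at this but do not do the verifications. Second, your non-vanishing argument (``$GA(\alpha)$ is too big'') is weaker than the paper's: the paper observes that the intertwining $I_{\sm}$ identifies $\pi(\alpha)$ with $\pi(\beta)$, so only one copy is being quotiented out and $\pi(\alpha)_{\Gnnot-\an}$ embeds into the cokernel, which is cleaner and explains exactly which vectors survive.
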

Here $LA(\alpha)$  or $LA(\beta)$ are locally analytic principal series (cf. \eqref{eq:LAalpha}) and $\pi(\beta)$ is a closed $GL_2(\Q_p)$-equivariant subspace of locally algebraic functions inside $LA(\beta)$ (cf. \eqref{embeddingsClosed} and \eqref{pibeta}). Here, $C(\beta)_m$, $GA(\alpha)$ and $GA(\beta)$ are rigid analytic functions whose descriptions are explicitly given in Theorem \ref{thm: Amalytic LA alpha beta} and Corollary \ref{cor:pibetaAnalytic}. Our main Theorem tells us that the representation of $\Gnnot$-analytic vectors of  $\mathbf{B}(V)_{\la}$ is a non-trivial quotient of the direct sum of two principal series representations.

There are special advantages (see Section \ref{sec: whySigmaAffinoid}) of considering this group $\Gnnot$, which is a $\sigma$-affinoid space (cf. Section \ref{sec:rigidbasic}) rather than the pro-$p$ Iwahori considered originally by Clozel in \cite{Clozel_global_II_published}. One of them is the fact that  its distribution algebra $\DanGnnotL$ is a coherent ring (cf. Section \ref{sec:wideopen}). The other is the fact that  the (strong) dual of $\big(\mathbf{B}(V)_{\la}\big)_{\Gnnot-\an} $ is a \textit{finitely presented} module over the distribution algebra $\DanGnnotL$ (cf. Proposition \ref{prop:finitepresented}). On the contrary, nothing is know about the ring theoretic structure of the distribution algebra of the affinoid pro-$p$ Iwahori and so we don't know if there is a theory of coadmissible modules over them, analogous to the case of Schneider and Teitelbaum for admissible locally analytic representations \cite{Algebras}. (Coadmissible modules arise by taking the strong dual of admissible representations). We do have a theory of coadmissible modules over $\DanGnnotL$, thanks to the work of Patel, Schmidt and Strauch \cite[Section 5]{PSS2}. Furthermore, there are several pathologies concerning the distribution algebra of the pro-$p$ Iwahori that Clozel explains in the appendix of his paper \cite{Clozel_global_II_published}. 

The organization of the article is as follows. In Section \ref{sec:basicSetup}, we introduce the basic notations and recall facts about trianguline representation and Colmez's construction of $p$-adic Langlands. Then, in Section \ref{sec:locallyanalytic basic}, we give the description of $\mathbf{B}(V)_{\la}$ after recalling some basics of locally analytic representation theory. In Sections \ref{sec:rigidbasic} and \ref{sec:wideopen}, following Emerton, we define rigid analytic representation and show its connection with locally analytic representation of Schneider and Teitelbaum. Further, in these Sections, we also collect crucial results on rigid analytic representations from the literature which are useful for the proof of our main Theorem. The notion of rigid analytic representations, due to Emerton, is relatively new and therefore we find it valuable to include a short exposition on it which explains our main theorem. A reader familiar with $p$-adic Langlands and analytic representations can completely ignore reading these Sections and move on to Section \ref{sec:mainheart}. However, we chose to include them so that our article is self contained. Our main Theorem is proved in Section \ref{sec:mainheart}. Moreover, we also include some open questions on Langlands base change of rigid analytic vectors in Section \ref{sec:basechange}.

\section{Basic setup}\label{sec:basicSetup}
\subsection{Notations}
 Let $G_{\Q_p}$ be the absolute Galois group of $\Q_p$ and $W_{\Q_p}$ be the Weil group which is dense in $G_{\Q_p}$. Let $\chi: G_{\Q_p} \rightarrow \Z_p^{\times}$ be the cyclotomic character. Let $F_\infty=\Q_p(\mu_{p^{\infty}})$ and $H_{\Q_p}= \ker \chi = \Gal(\closure{\Q}_p/F_\infty).$ We can view $\chi$ as an isomorphism from $\Gamma=G_{\Q_p}/H_{\Q_p}=\Gal(F_\infty/\Q_p)$ to $\Z_p^{\times}$. Let $\val_p(-)$ denote the $p$-adic valuation on $\closure{\Q}_p$ normalized by $\val_p(p)=1$.  Let $ \charvar(L)$  be the set of continuous characters $\delta: \Q_p^{\times} \rightarrow L^\times$. Following Colmez \cite{Colmeztri}, we remark that this notation $\charvar(L)$ is justified by the fact that it is the $L$-rational point of a character variety $\charvar$. We define the weight $w(\delta)$ of $\delta$ by the formula $w(\delta)=\frac{\log \delta(u)}{\log u}$ where $u \in \Z_p^\times$ is not a root of unity. By class field theory, we can view an element of $\charvar(L)$ as a continuous character of the Weil group $W_{\Q_p}$. Furthermore, if $\delta$ is a unitary character (having values in $\mathcal{O}_L^{\times}$), then one can uniquely extend $\delta$ to a continuous character of $W_{\Q_p}$. Then, one can reinterpret $w(\delta)$ as generalized Hodge-Tate weight of $\delta$. We denote $x \in \charvar(L)$ to be the character induced by the inclusion of $\Q_p$ in $L$, and $\absolute{x}$ the character sending $x \in \Q_p^\times$ to $p^{-\val_p(x)}$. Then $x\absolute{x}$ is unitary corresponding to the cyclotomic character $\chi$. 
\subsection{Trianguline representations}\label{sec:trianguline}
Let $\robba_L$ denote the Robba ring over $L$ with the Frobenius $\varphi$-action and $\Gamma$-action given by $\frob(T)=(1+T)^p-1$ and $\gamma(T)=(1+T)^{\chi(\gamma)}-1$ for any $\gamma \in \Gamma$. For $\delta \in \charvar(L)$, let $\robba_L(\delta)$ be the Robba twisted by the character $\delta$. It has $\robba_L$-basis $e$ such that $\frob, \Gamma$-actions are given by $\frob(xe)=\delta(p)\frob(x)e$ and $\gamma(xe) = \delta(\chi(\gamma)) \gamma(x) e$ for any $x \in \robba_L$. Colmez shows that if $M$ is a rank one $\phigamma$-module over $\robba_L$, then $M \cong \robba_L(\delta)$ for some $\delta$ \cite[Prop. 3.1]{Colmeztri}. Recall that a $\phigamma$-module over $\robba_L$ is called triangulable if it can be expressed as successive extensions of rank one $\phigamma$-modules over $\robba_L$. An $L$-representation $V$ of $G_{\Q_p}$ is called trianguline if its associated $\phigamma$-module, denoted by $\Ddaggerrig(V)$, over $\robba_L$ is triangulable, i.e. it fits into a short exact sequence $$ 0 \rightarrow \robba_L(\delta_1) \rightarrow \Ddaggerrig(V) \rightarrow \robba_L(\delta_2) \rightarrow 0.$$ The representation $V$ is then determined by a triple $(\delta_1,\delta_2, h)$ where $h \in \Ext^1(\robba_L(\delta_2), \robba_L(\delta_1)),$ the extension corresponding to $\Ddaggerrig(V)$. The generalized Hodge-Tate weights of $V$ will then be $w(\delta_1)$ and $w(\delta_2)$ and it follows that $\val_p(\delta_1(p)) + \val_p(\delta_2(p)) =0$. Colmez studied $ \Ext^1(\robba_L(\delta_2), \robba_L(\delta_1))$ and showed that for any two characters $\delta_1,\delta_2 \in \charvar(L)$,  $ \Ext^1(\robba_L(\delta_2), \robba_L(\delta_1))$ is an $L$-vector space of dimension $1$ unless $\delta_1\delta_2^{-1}$ is of the form $x^{-i} (i \geq 0)$, or of the form $\absolute{x}x^i (i \geq 1)$; in the last two cases, $ \Ext^1(\robba_L(\delta_2), \robba_L(\delta_1))$ is of dimension $2$ and the associated projective space is naturally isomorphic to $\mathbf{P}^1(L)$ \cite[Prop. 8.2]{ColmezIII}. Let $\scriptS$ be the set of $(\delta_1, \delta_2, \scriptL)$, where $(\delta_1, \delta_2)$ is an element of $\charvar(L) \times \charvar(L)$ and $\scriptL \in \Proj( \Ext^1(\robba_L(\delta_2), \robba_L(\delta_1))) $, the later space being identified with $\mathbf{P}^{0}(L)=\{\infty\} $ (resp. $\mathbf{P}^1(L)$)  if $ \Ext^1(\robba_L(\delta_2), \robba_L(\delta_1))$ is of dimension $1$ (resp. $2$).  Take $s=(\delta_1,\delta_2, \scriptL)$ which determines an extension $D(s)$ of twisted Robba rings upto isomorphism by $\scriptL$. We have a short exact sequence $$0 \rightarrow \robba_L(\delta_1) \rightarrow D(s) \rightarrow \robba_L(\delta_2) \rightarrow 0.$$ We let $\scriptS_{\ast}$ to be the subset of triples  $\scriptS$ consisting of $s$ such that $\val_p(\delta_1(p)) + \val_p(\delta_2(p)) =0$ and $\val_p(\delta_1(p)) >0$ and we define $u(s)= \val_p(\delta_1(p))=-\val_p(\delta_2(p))$ and $w(s)= w(\delta_1) -w(\delta_2)$. In the case when $D(s)$ is étale, we denote by $V(s)$, the $L$-linear representation of $G_{\Q_p}$ such that $\Ddaggerrig(V(s)) =D(s)$. In \cite{Colmeztri}, Colmez partitions $\scriptS_\ast$ into $$\scriptS_\ast = \scriptS_\ast^{\mathrm{ng}} \sqcup \scriptS_\ast^{\mathrm{cris}} \sqcup \scriptS_\ast^{\mathrm{st}} \sqcup \scriptS_\ast^{\mathrm{ord}} \sqcup \scriptS_\ast^{\mathrm{ncl}}. $$
Here the exponents `ng', `cris', `st', `ord', `ncl' refer to `non geometric', `crystalline', `semistable', `ordinary', `non classical' respectively. From these Colmez classified the irreducible ones $$\scriptS_\irr  = \scriptS_\ast^{\mathrm{ng}} \sqcup \scriptS_\ast^{\mathrm{cris}} \sqcup \scriptS_\ast^{\mathrm{st}}, $$
and showed that if $s \in \scriptS_\irr$, then $D(s)$ is étale and $V(s)$ is irreducible. Conversely, if $V$ is irreducible and trianguline, then $V\cong V(s)$ for some $s \in \scriptS_\irr$ \cite[Thm. 0.5(i),(ii)]{Colmeztri}. Throughout this paper we will be interested with the crystalline ones $\scriptS_\ast^{\mathrm{cris}} $. \\

$\scriptS_\ast^{\mathrm{cris}} $ is the set of all $s \in \scriptS_\irr$ such that $w(s)$ is an integer greater or equal to $1$, $u(s) <w(s)$ and $\scriptL = \infty$. \\

We have that $V \in \scriptS_\ast^{\mathrm{cris}} $ if and only if $V$ is twist of an irreducible and crystabelian (becomes crystalline after going to a finite abelian extension of $\Q_p$)  representation. 

\subsection{$p$-adic Langlands for crystalline representations}\label{sec:padicLang}
Let us first briefly recall Colmez's construction of $p$-adic local Langlands for any two-dimensional irreducible $L$-linear representation of $G_{\Q_p}$.  Then, we will provide an explicit description of the $GL_2(\Q_p)$-representation for $V$ crystalline which is due to Berger and Breuil \cite{BerBre}. 
 Let $D=D(s)$ be a rank two, irreducible and étale $\phigamma$-module over $\robba_L$ corresponding to the Galois representation $V=V(s)$. Then Colmez constructs a $GL_2(\Q_p)$-equivariant sheaf on $\mathbf{P}^1=\mathbf{P}^1(\Q_p)$ whose global sections $D \boxtimes \mathbf{P}^1$ give 
 a description of the associated $GL_2(\Q_p)$-representation $\mathbf{\Pi}(D)$. More precisely, he shows that we have an exact sequence $$ 0 \rightarrow \mathbf{\Pi}(D)^{\ast} \otimes \delta_D \rightarrow D \boxtimes \mathbf{P}^1 \rightarrow \mathbf{\Pi}(D) \rightarrow 0,$$ where $\delta_D$ is the character $\chi^{-1}\det(V)$  which is also the central character of $\mathbf{\Pi}(D)$. Here $\mathbf{\Pi}(D)^\ast$ is the topological dual of $\mathbf{\Pi}(D)$ and $\mathbf{\Pi}(D)^{\ast} \otimes \delta_D  \cong D^\natural \boxtimes \mathbf{P}^1$; that is $\mathbf{\Pi}(D) \cong ( D \boxtimes \mathbf{P}^1)/(D^\natural \boxtimes \mathbf{P}^1) $ \cite[Thm.  0.17]{Colmezphi}. Colmez showed that $\mathbf{\Pi}(D)$ is an admissible unitary representation of $GL_2(\Q_p)$  and he sets the $p$-adic local Langlands correspondence as $V \mapsto \mathbf{\Pi}(V):= \mathbf{\Pi}(D(V))$. Let us now suppose that $V$ is crystalline. In the following, we give an explicit construction of the corresponding $GL_2(\Q_p)$-representation due to Berger and Breuil \cite{BerBre}, \cite{BerBreuilNotes}. We can twist $V$ be a suitable power of the cyclotomic character so that its Hodge-Tate weights are $0$ and $k-1$ with $k \geq 1$. Suppose  that $V$ is irreducible with distinct Hodge-Tate weights. By a result of Colmez \cite[Prop. 4.14]{Colmeztri}, we know that $V$ is uniquely determined by a pair of smooth characters of $\Q_p^\times$. Furthermore, suppose that $V$ is Frobenius semisimple, i.e. the Frobenius $\varphi$ on $D_{\cris}(V) = (B_\cris \otimes_{\Q_p} V)^{G_{\Q_p}}$ is semisimple. Here $B_\cris$ is one of Fontaine's period rings and $V$ is considered as a $\Q_p$-linear representation by restriction of scalars. One can check that $D_{\cris}(V)$ is an $L$-vector space and the Frobenius $\varphi: D_\cris(V) \rightarrow D_\cris(V)$ is $L$-linear. Then,  there are two elements (coming from the values at $p$ of the pair of smooth characters associated to $V$), $\alpha, \beta \in \OO_L$ (the ring of integers of $L$) such that $\alpha \neq \beta, 0 < \val_p(\beta) \leq \val_p(\alpha), \val_p(\alpha) + \val_p(\beta) = k-1$ and $D_\cris(V) = D(\alpha,\beta)=Le_\alpha \oplus Le_\beta$ with $\frob(e_\alpha) = \alpha^{-1}e_\alpha$ and $\frob(e_\beta)=\beta^{-1}e_\beta$. The module $D(\alpha,\beta)$ is a filtered $\varphi$-module over $L$ where the filtration is given by 
 \begin{equation*}
 \mathrm{Fil}^i D(\alpha,\beta)=
 \begin{cases}
 D(\alpha, \beta) & \text{if $i \leq -(k-1)$}\\
 L(e_\alpha + e_\beta) & \text{if $-(k-2) \leq i \leq 0$}\\
 0 & \text{if $i >0$}
 \end{cases}       
 \end{equation*}
(cf. Section $7.1$ of \cite{BerBreuilNotes}). Berger and Breuil also showed that the functor $V \mapsto D_{\cris}(V)$ is an equivalence of categories from the category of $L$-linear crystalline representations to the category of (admissible) filtered $\varphi$-modules over $L$ (cf. \cite[Prop. 2.4.5]{BerBre}, see also \cite[Thm. 2.3.1]{BerBreuilNotes}). Let $B(\Q_p)$ be the upper triangular Borel subgroup of $GL_2(\Q_p)$. Following the notations of Berger and Breuil in \cite{BerBreuilNotes}, we define the locally algebraic representations $\pi(\alpha)$ and $\pi(\beta)$ as 
\begin{align}\label{eq:PIAB}
\pi(\alpha)&:=\big(\Ind_{B(\Q_p)}^{GL_2(\Q_p)} \unr(\alpha^{-1}) \otimes x^{k-2}\unr(p\beta^{-1})   \big)_{\lalg} \\
&\cong \Sym^{k-2}L^2 \otimes_L \big(  \Ind_{B(\Q_p)}^{GL_2(\Q_p)} \unr(\alpha^{-1}) \otimes \unr(p\beta^{-1})   \big)_\sm\\
\pi(\beta)&:=\big(\Ind_{B(\Q_p)}^{GL_2(\Q_p)} \unr(\beta^{-1}) \otimes x^{k-2}\unr(p\alpha^{-1})   \big)_{\lalg}  \label{pibeta}\\
&\cong \Sym^{k-2}L^2 \otimes_L \big(  \Ind_{B(\Q_p)}^{GL_2(\Q_p)} \unr(\beta^{-1}) \otimes \unr(p\alpha^{-1})   \big)_\sm,
\end{align}
where $\unr(-)$ is the unramified  character from $\Q_p^\times$  to $L^\times$ given by $\unr(\lambda): y \mapsto \lambda^{\val_p(y)}$ for $\lambda= \alpha^{-1}$ or $p\beta^{-1}$. The subscript $`\sm$' denotes the smooth vectors in the induced principal series. (Note that, to be consistent with notations of Emerton, we have used `sm' as a subscript whereas Berger and Breuil usually writes it as a superscript). We equip $\pi(\alpha)$ (resp. $\pi(\beta)$) with the unique locally convex topology such that the open sets are lattices of $\pi(\alpha)$ (resp. $\pi(\beta)$). (A lattice of an $L$-vector space $U$ is an $\OO_L$-submodule which generates $U$ over $L$). In \cite[Thm. 4.3.1]{BerBre}, Berger and Breuil constructed the universal unitary completion $B(\alpha)/L(\alpha)$ of $\pi(\alpha)$ and a $GL_2(\Q_p)$-equivariant map $\pi(\alpha) \rightarrow B(\alpha)/L(\alpha)$ where 
\begin{equation}
B(\alpha) = \big(\Ind_{B(\Q_p)}^{GL_2(\Q_p)} \unr(\alpha^{-1}) \otimes x^{k-2}\unr(p\beta^{-1})   \big)_{\mathcal{C}^{\val_p(\alpha)}},
\end{equation}
and $L(\alpha)$ is a certain closed subspace of $B(\alpha)$ (see section 7.1 of \cite{BerBreuilNotes}). Similarly interchanging $\alpha$ with $\beta$, we can construct $B(\beta)/L(\beta)$ and realize it as the universal unitary completion of $\pi(\beta)$. Recall that, there exists up to multiplication by a non-zero scalar, a unique non-zero $GL_2(\Q_p)$-intertwining operator  \cite[Sec. 9.1]{BerBreuilNotes}
$$I_{\sm}: \big(  \Ind_{B(\Q_p)}^{GL_2(\Q_p)} \unr(\alpha^{-1}) \otimes \unr(p\beta^{-1})   \big)_\sm \rightarrow \big(  \Ind_{B(\Q_p)}^{GL_2(\Q_p)} \unr(\beta^{-1}) \otimes \unr(p\alpha^{-1})   \big)_\sm.$$
Tensoring with the identity map on $\Sym^{k-2}L^2$, we get a non-zero $GL_2(\Q_p)$-equivariant morphism $I: \pi(\alpha) \rightarrow \pi(\beta)$.  Berger and Breuil showed the following fact \cite[Corollary 7.2.3]{BerBreuilNotes}.
\begin{lemma}\label{lemma:intertwining}
	Assume $0 < \val_p(\alpha) < k-1$, then $B(\alpha)/L(\alpha)$ and $B(\beta)/L(\beta)$ are unitary $GL_2(\Q_p)$-Banach spaces and we have a commutative $GL_2(\Q_p)$-equivariant diagram:

\[	
	\begin{CD}
		B(\alpha)/L(\alpha)     @>{\widehat{I}}>>  B(\beta)/L(\beta)\\
		@AA{~}A       @AA{~}A\\
		\pi(\alpha)    @>I>>  \pi(\beta)
	\end{CD}
\]

where the horizontal maps are isomorphisms and the vertical maps are closed $GL_2(\Q_p)$-equivariant embeddings. Here $\widehat{I}$ is the continuous $GL_2(\Q_p)$-morphism induced from I. 
\end{lemma}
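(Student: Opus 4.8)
The plan is to produce $\widehat{I}$ from the universal property of the unitary completion, to show it is invertible by transporting the opposite smooth intertwining operator, and to import from Berger--Breuil's construction (\cite[Thm.\ 4.3.1]{BerBre}, \cite[Section~7.1]{BerBreuilNotes}) the statements that, under $0<\val_p(\alpha)<k-1$, the canonical map $j_\alpha\colon\pi(\alpha)\to B(\alpha)/L(\alpha)$ is an injective strict $GL_2(\Q_p)$-equivariant embedding with dense image and $B(\alpha)/L(\alpha)$ is a nonzero (admissible) unitary $GL_2(\Q_p)$-Banach space, and likewise with $\beta$. These last facts are the analytic heart and I would simply quote them; they give the two vertical embeddings of the diagram, and I will use $B(\alpha)/L(\alpha)$ through its defining universal property: for every unitary $GL_2(\Q_p)$-Banach space $W$, every continuous $GL_2(\Q_p)$-equivariant map $\pi(\alpha)\to W$ factors uniquely and continuously through $j_\alpha$.

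To build $\widehat{I}$: the composite $\pi(\alpha)\xrightarrow{\,I\,}\pi(\beta)\xrightarrow{\,j_\beta\,}B(\beta)/L(\beta)$ is continuous and $GL_2(\Q_p)$-equivariant, and its target is a unitary Banach space, so by the universal property it factors uniquely as $\widehat{I}\circ j_\alpha$ for a continuous $GL_2(\Q_p)$-morphism $\widehat{I}\colon B(\alpha)/L(\alpha)\to B(\beta)/L(\beta)$. This is exactly the commutativity of the square, with $\widehat{I}$ the morphism induced by $I$.

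To see $\widehat{I}$ is an isomorphism, I would use that the smooth principal series $\big(\Ind_{B(\Q_p)}^{GL_2(\Q_p)}\unr(\alpha^{-1})\otimes\unr(p\beta^{-1})\big)_\sm$ and $\big(\Ind_{B(\Q_p)}^{GL_2(\Q_p)}\unr(\beta^{-1})\otimes\unr(p\alpha^{-1})\big)_\sm$ are irreducible, since the ratio of the two unramified characters defining each is not $\absolute{x}^{\pm1}$ (as $\alpha\neq\beta$, $0<\val_p(\beta)\le\val_p(\alpha)$ and $\val_p(\alpha)+\val_p(\beta)=k-1$). Hence $I_\sm$ is an isomorphism, so, by uniqueness up to scalar of the intertwining operator in each direction, there is a nonzero $I_\sm^\vee$ in the reverse direction with $I_\sm^\vee\circ I_\sm$ and $I_\sm\circ I_\sm^\vee$ nonzero scalars; tensoring with $\mathrm{id}_{\Sym^{k-2}L^2}$ gives $I\colon\pi(\alpha)\to\pi(\beta)$ and $I^\vee\colon\pi(\beta)\to\pi(\alpha)$ with $I^\vee\circ I$ and $I\circ I^\vee$ nonzero scalars, so in particular the bottom horizontal map $I$ is an isomorphism. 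Applying the universal property for $\beta$ to $\pi(\beta)\xrightarrow{I^\vee}\pi(\alpha)\xrightarrow{j_\alpha}B(\alpha)/L(\alpha)$ yields a continuous $GL_2(\Q_p)$-morphism $\widehat{I^\vee}\colon B(\beta)/L(\beta)\to B(\alpha)/L(\alpha)$. Then $\widehat{I^\vee}\circ\widehat{I}$ is a continuous $GL_2(\Q_p)$-endomorphism of $B(\alpha)/L(\alpha)$ which, by the commutative squares, restricts on the dense subspace $j_\alpha(\pi(\alpha))$ to the nonzero scalar $I^\vee\circ I$; as $B(\alpha)/L(\alpha)$ is Hausdorff it therefore equals that scalar, and symmetrically $\widehat{I}\circ\widehat{I^\vee}$ is a nonzero scalar. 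Hence $\widehat{I}$ is a continuous $GL_2(\Q_p)$-equivariant topological isomorphism, which gives the statement.

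\textbf{Main obstacle.} The construction of $\widehat{I}$ and the verification that it is an isomorphism are formal once the universal unitary completion is in hand. The real difficulty, which is Berger--Breuil's and which I would cite rather than reprove, is the content imported in the first paragraph: building the $GL_2(\Q_p)$-invariant norm on $B(\alpha)$ and the closed subspace $L(\alpha)$ so that the quotient is a \emph{nonzero} unitary Banach space and so that \emph{no} vector of $\pi(\alpha)$ is killed (equivalently, $j_\alpha$ is injective and strict with dense image). This is exactly where the hypothesis $0<\val_p(\alpha)<k-1$ enters: at the boundary values $\val_p(\alpha)\in\{0,k-1\}$ — the ordinary case — the completion degenerates and the diagram as stated fails.
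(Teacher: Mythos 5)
The paper does not give a proof of this lemma; it is stated as a fact imported verbatim from Berger--Breuil (\cite[Cor.\ 7.2.3]{BerBreuilNotes}). Your reconstruction is correct and matches the natural argument: cite Berger--Breuil for the analytic heart (that $B(\alpha)/L(\alpha)$ is a nonzero unitary Banach space, that $j_\alpha$ is a closed embedding with dense image, and that it enjoys the universal property of the universal unitary completion), use that universal property to produce $\widehat{I}$ and $\widehat{I^\vee}$, and use irreducibility of the smooth principal series together with density to conclude both composites are nonzero scalars, hence $\widehat{I}$ is a topological isomorphism.
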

The $p$-adic Langlands of Berger and Breuil associates a non-zero $GL_2(\Q_p)$-Banach representation $\mathbf{B}(V):= B(\alpha)/L(\alpha)$ to $V$. The relation of $\mathbf{B}(V)$ with Colmez's construction is the following. The Banach dual of $\mathbf{B}(V)$ is naturally isomorphic to $D({\check{V}})^\natural \boxtimes \mathbf{P}^1$ as a $L$-Banach space representation of $GL_2(\Q_p)$. Here $\check{V}$ is the contragredient of $V$.

Berger and Breuil conjectured that the locally analytic vectors $\mathbf{B}(V)_{\la}$ of $\mathbf{B}(V)$ have the form  $$\mathbf{B}(V)_{\la} \cong LA(\alpha) \oplus_{\pi(\beta)}LA(\beta),$$
where $LA(\alpha)$ and $LA(\beta)$ are two locally analytic principal series (cf. Section \ref{subsec: locallycrystalline}). This is now proved by Liu \cite{Liu}. 

\section{Locally analytic automorphic representations}\label{sec:locallyanalytic basic}
In the automorphic side of the $p$-adic Langlands, the locally analytic representations of $GL_2(\Q_p)$ were initially studied by Schneider and Teitelbaum in \cite{Sch10}, \cite{SchneiderBan} and \cite{Algebras}.  In the following, we are going to first recall some basic general facts about locally analytic representations. Then, we will state results of Berger, Breuil and Colmez on locally analytic vectors of $\mathbf{\Pi}(V)$ or $\mathbf{B}(V)$. 

Let $L$ be an extension of $K$ over $\Q_p$, $G$ be a locally $K$-analytic group, $U$ be a $L$-Banach space representation of $G$.  Let $\scriptC^{\la}(G,U)$ be the space of locally analytic $U$-valued functions on $G$. The space of locally analytic vectors of $U$ is the subspace of $U$ consisting of those vectors $u \in U$ for which the orbit map $g \mapsto gu$ is in $\scriptC^{\la}(G,U)$. We denote this space of locally analytic vectors by $U_{\la}$ (unlike the notation of Schneider and Teitelbaum, here we have followed the notation of Emerton  \cite{Emertonbook} and used `la' as a subscript to denote locally analytic vectors). We say that $U$ is a locally analytic $G$-representation if the natural map $U_{\la} \rightarrow U$ is a bijection. (This definition can be generalized to  barrelled locally convex Hausdorff spaces \cite{Emertonbook} but we will mostly be interested in locally analytic vectors of  Banach spaces $\mathbf{B}(V)$). Let $\scriptD^{\la}(G,L)$ be the locally analytic distribution algebra on $G$ (strong dual of $\scriptC^{\la}(G,L)$, i.e.  $\Hom_L(\scriptC^{\la}(G,L),L)_b$ where the subscript `b' denotes strong topology \cite{NFA}). The crucial property of $\scriptD^{\la}(G,L)$ is that it is a Fréchet-Stein algebra when $G$ is compact \cite[Thm. 5.1]{Algebras}. That is, $$\scriptD^{\la}(G,L)=\varprojlim_r\scriptD_r(G,L)$$ where $\scriptD_r(G,L)$ are $L$-Banach noetherian  algebras with flat transition maps. This allowed Schneider and Teitelbaum to describe a category of coadmissible modules over $\scriptD^{\la}(G,L)$. A coadmissible module $M$ will have the form $M \cong \varprojlim_rM_r$ where each $M_r$ is a finitely generated $D_r(G,L)$ module (carrying a Banach topology induced from $\scriptD_r(G,L)^n \rightarrow M$) together with compatible isomorphisms $$\scriptD_r(G,L) \otimes_{\scriptD_{r^{\prime}}(G,L)}M_{r^{\prime}} \cong M_r \quad \text{for } r^{\prime}<r.$$ We equip $M$ with the projective limit topology which makes it a $L$-Fréchet space. In the case when $G$ is compact, an admissible locally analytic $G$-representation over $L$ is a locally analytic $G$-representation on an $L$-vector space of compact type $U$ such that the strong dual $U^{\prime}_b=\Hom_L(U, L)_b$ is a coadmissible $\scriptD^{\la}(G,L)$-module equipped with its canonical topology. For general $G$, a locally analytic $G$-representation over $L$ is called admissible if it is admissible as an $H$ representation for one (equivalently any) open compact subgroup $H$ of $G$. If $G$ is compact, $U$ is a locally analytic $G$-representation, then $U$ is strongly admissible if $U^{\prime}_b$ is finitely generated as a $\scriptD^{\la}(G,L)$-module (as $U^{\prime}_b$ is quotient of some $\scriptD^{\la}(G,L)^n$ by some closed submodule, by Lemma $3.6$ of \cite{Algebras}, it is coadmissible with its canonical topology).  Emerton \cite[Thm. 5.1.15]{Emertonbook} further showed that being strongly admissible is equivalent to the fact that $U$ admits a closed $G$-equivariant embedding into $\scriptC^{\la}(G,L)^n$ for some natural number $n$ (which comes by dualizing the surjection $\scriptD^{\la}(G,L)^n \twoheadrightarrow U_b^{\prime}$). Here is an important theorem of Schneider and Teitelbaum \cite[Thm. 7.1]{Algebras} which finds locally analytic vectors of a $L$-Banach representation of $G$.
\begin{theorem}
	Suppose $G$ is compact and $U$ is a $L$-Banach representation of $G$ such that the dual $U^{\prime}_b$ is finitely generated module over the Iwasawa algebra $L[[G]]$  of $G$ (the algebra of $\Z_p$-valued measures on $G$, base changed to  $L$). Then
	\begin{enumerate}
		\item $U_{\la}$ is dense in $U$.
		\item $U_{\la}$ is strongly admissible $G$-representation.
		\item $(U_{\la})_b^{\prime} \cong \scriptD^{\la}(G,L) \otimes_{L[[G]]}U_b^{\prime}$.
	\end{enumerate}
\end{theorem}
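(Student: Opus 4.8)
The plan is to realize $U$ inside a finite product of copies of the Banach space $C(G,L)$ of continuous $L$-valued functions on $G$, pass to locally analytic vectors there, and then dualize, reading off the three assertions from the resulting exact sequences. First, since $G$ is compact the Iwasawa algebra $L[[G]]$ is Noetherian, so the hypothesis that $U'_b$ is finitely generated upgrades to a finite presentation $L[[G]]^{m}\xrightarrow{A}L[[G]]^{n}\to U'_b\to 0$ in which the kernel of $L[[G]]^{n}\to U'_b$, being finitely generated, is a closed submodule. Dualizing (strong duals, using $\bigl(L[[G]]\bigr)'_b\cong C(G,L)$, the fact that strong duality carries a strict exact sequence of $L$-Banach spaces of countable type back to one, the closed-range theorem, $p$-adic reflexivity, and that $U$ is complete and hence has closed image in its bidual) yields a strict exact sequence of $L$-Banach $G$-representations
\[ 0\longrightarrow U\longrightarrow C(G,L)^{n}\xrightarrow{A^{t}}C(G,L)^{m}, \]
exhibiting $U$ as a closed $G$-subrepresentation of $C(G,L)^{n}$.

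Next I would apply the locally analytic vectors functor $(-)_{\la}$, which is left exact and which, for a closed $G$-subrepresentation $V\subseteq W$, identifies $V_{\la}$ with $V\cap W_{\la}$ as topological $G$-representations. Combined with the standard identification $C(G,L)_{\la}=\scriptC^{\la}(G,L)$ this gives
\[ 0\longrightarrow U_{\la}\longrightarrow \scriptC^{\la}(G,L)^{n}\xrightarrow{A^{t}}\scriptC^{\la}(G,L)^{m}, \qquad U_{\la}=U\cap\scriptC^{\la}(G,L)^{n}. \]
Thus $U_{\la}$ is a closed subspace of the compact-type space $\scriptC^{\la}(G,L)^{n}$, hence is itself of compact type, satisfies $(U_{\la})_{\la}=U_{\la}$ (so is a locally analytic $G$-representation), and admits a closed $G$-equivariant embedding into $\scriptC^{\la}(G,L)^{n}$; by Emerton's criterion \cite[Thm. 5.1.15]{Emertonbook} recalled above, it is therefore strongly admissible. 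This is assertion (2), and dualizing the embedding also gives a topological surjection $\scriptD^{\la}(G,L)^{n}\twoheadrightarrow (U_{\la})'_b$.

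For assertion (3) I would compare two presentations of the same $\scriptD^{\la}(G,L)$-module. Applying $\scriptD^{\la}(G,L)\otimes_{L[[G]]}(-)$ to the presentation of $U'_b$ gives a right-exact sequence, whence $\scriptD^{\la}(G,L)\otimes_{L[[G]]}U'_b\cong\Coker\bigl(A\colon\scriptD^{\la}(G,L)^{m}\to\scriptD^{\la}(G,L)^{n}\bigr)$. On the other hand, the surjection $\scriptD^{\la}(G,L)^{n}\twoheadrightarrow (U_{\la})'_b$ kills the image of $A$ (it dualizes $U_{\la}\hookrightarrow\scriptC^{\la}(G,L)^{n}\xrightarrow{A^{t}}\scriptC^{\la}(G,L)^{m}$, whose composite vanishes), so it factors through a surjection $\scriptD^{\la}(G,L)\otimes_{L[[G]]}U'_b\twoheadrightarrow (U_{\la})'_b$, which one checks is the natural map. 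Injectivity then amounts to the equality $\mathrm{im}\bigl(A\colon\scriptD^{\la}(G,L)^{m}\to\scriptD^{\la}(G,L)^{n}\bigr)=\Ker\bigl(\scriptD^{\la}(G,L)^{n}\to (U_{\la})'_b\bigr)$; both are closed submodules (the first because finitely generated submodules of $\scriptD^{\la}(G,L)^{n}$ are closed, the Fréchet-Stein algebra $\scriptD^{\la}(G,L)$ being as in \cite[Thm. 5.1]{Algebras}), so by the orthogonal-complement correspondence between closed submodules of $\scriptD^{\la}(G,L)^{n}$ and closed subspaces of $\scriptC^{\la}(G,L)^{n}$ it suffices to check the equality of their annihilators, which is precisely the identity $\Ker\bigl(A^{t}\colon\scriptC^{\la}(G,L)^{n}\to\scriptC^{\la}(G,L)^{m}\bigr)=U\cap\scriptC^{\la}(G,L)^{n}=U_{\la}$ from the previous display. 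This gives (3). Finally, for assertion (1) I would prove density directly by smoothing: for the Haar measure $\mu$ on $G$ and $f\in\scriptC^{\la}(G,L)$, the operator $u\mapsto\int_{G}f(g)\,gu\,d\mu(g)$ takes values in $U_{\la}$ — its orbit map $h\mapsto\int_{G}f(h^{-1}g)\,gu\,d\mu(g)$ is locally analytic because $f$ is — and taking $f=\mu(H)^{-1}\mathbf{1}_{H}$ for a shrinking family of compact open subgroups $H\leq G$ produces vectors of $U_{\la}$ converging to $u$ by continuity of the action at the identity.

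The main obstacle is the $p$-adic functional-analytic bookkeeping supporting the two dualization steps: one must verify that the sequences in play are strict exact so that strong duality preserves them, establish the requisite reflexivity (e.g.\ that closed subspaces of $C(G,L)^{n}$ and of $\scriptC^{\la}(G,L)^{n}$ are reflexive) together with the orthogonal-complement dictionary, and invoke the finiteness properties of the Fréchet-Stein algebra $\scriptD^{\la}(G,L)$, in particular the closedness of its finitely generated submodules. Once these ingredients — all standard, cf.\ \cite{Algebras}, \cite{NFA} — are in place, the argument is essentially formal. An alternative, more algebraic packaging would isolate the single structural input that $\scriptD^{\la}(G,L)$ is flat (indeed faithfully flat) over $L[[G]]$, from which (1) and (3) both follow at once, but establishing that flatness itself requires a filtration computation on the Banach layers $\scriptD_{r}(G,L)$.
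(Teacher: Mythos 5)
The paper does not prove this statement; it simply cites Schneider and Teitelbaum's Theorem~7.1 from \cite{Algebras}. So the relevant comparison is against that source. Your treatment of assertions~(2) and~(3) is essentially the same dualization strategy as theirs: present $U'_b$ over the Noetherian algebra $L[[G]]$, dualize to embed $U$ in $C(G,L)^n$, apply $(-)_{\la}$ using $C(G,L)_{\la}=\scriptC^{\la}(G,L)$ and left exactness, and read off strong admissibility from Emerton's criterion. Your annihilator argument for~(3), identifying $\mathrm{im}(A)$ with $\Ker\bigl(\scriptD^{\la}(G,L)^n\twoheadrightarrow (U_{\la})'_b\bigr)$ via the closed-submodule/closed-subspace dictionary, is a legitimate reorganization that sidesteps an explicit appeal to flatness, and the closedness of finitely generated submodules of $\scriptD^{\la}(G,L)^n$ that it invokes is a genuine feature of Fr\'echet--Stein algebras. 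Modulo the functional-analytic bookkeeping you flag, I believe (2) and~(3) go through.

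The proof of~(1), however, has a genuine gap. Your smoothing operator $u\mapsto\int_G f(g)\,gu\,d\mu(g)$ is simply not defined for a general vector $u$ of a $p$-adic Banach representation. The $p$-adic Haar functional $d\mu$ on a compact $p$-adic Lie group is not a bounded measure: normalizing $\mu(G)=1$ forces $\mu(H')=[G:H']^{-1}$ for open $H'$, and $\absolute{[G:H']^{-1}}_p=[G:H']\to\infty$, so $d\mu\notin L[[G]]$ and the Riemann sums $\sum_i\mu(H_i)\,g_iu$ fail to converge for a merely continuous orbit map $g\mapsto gu$. Even for the test functions $f=\mu(H)^{-1}\mathbf{1}_H$ the distribution $f\,d\mu$ lies in $\scriptD^{\la}(G,L)$ but not in $L[[G]]$, so its action is defined only on $U_{\la}$, and the argument becomes circular. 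This is precisely the well-known divergence from the Archimedean setting, where G\aa{}rding smoothing gives density for free: in the $p$-adic world density of $U_{\la}$ is a nontrivial theorem. The route you mention at the end as an ``alternative'' — faithful flatness of $\scriptD^{\la}(G,L)$ over $L[[G]]$, which upon tensoring the dual sequence yields injectivity of $U'_b\to(U_{\la})'_b$ and hence density by Hahn--Banach — is in fact the route Schneider and Teitelbaum take, and is (to my knowledge) unavoidable for~(1).
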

\subsection{Locally analytic vectors of crystalline representations}\label{subsec: locallycrystalline}
With notations as in Section \ref{sec:padicLang}, we know that for a rank-two, irreducible, étale $\phigamma$-module $D$ over $\robba_L$, we have an exact sequence
 $$ 0 \rightarrow \mathbf{\Pi}(D)^{\ast} \otimes \delta_D \rightarrow D \boxtimes \mathbf{P}^1 \rightarrow \mathbf{\Pi}(D) \rightarrow 0.$$ 
Then Colmez found the locally analytic vectors of $\Pi(D)$ and showed that the following sequence is short exact.
$$0 \rightarrow (\mathbf{\Pi}(D)_{\la})^{\ast} \otimes \delta_D \rightarrow D_{\rig} \boxtimes \mathbf{P}^1 \rightarrow \mathbf{\Pi}(D)_{\la} \rightarrow 0,$$ 
and $(\check{\Pi}(D)_{\la})^{\ast} \cong D_{\rig}^\natural \boxtimes \mathbf{P}^1 \cong (\mathbf{\Pi}(D)_{\la})^{\ast} \otimes \delta_D$ (see. Theorem 0.17 and Theorem V.2.20 of \cite{Colmezphi}).
Now suppose $V$ is irreducible, crystalline, Frobenius semisimple with distinct Hodge-Tate weights $0$ and $k-1$. Following notation as in Section \ref{sec:padicLang}, Berger and Breuil defines a locally analytic principal series 
\begin{equation}\label{eq:LAalpha}
LA(\alpha)=\big(\Ind_{B(\Q_p)}^{GL_2(\Q_p)} \unr(\alpha^{-1}) \otimes x^{k-2}\unr(p\beta^{-1})   \big)_{\la}. 
\end{equation}
 We set $LA(\beta)$ by replacing $\alpha$ with $\beta$. There are $GL_2(\Q_p)$-equivariant injections $LA(\alpha) \hookrightarrow B(\alpha)$ and $LA(\beta) \hookrightarrow B(\beta)$. Obviously the locally algebraic representations $\pi(\alpha)$ and $\pi(\beta)$ have closed $GL_2(\Q_p)$-equivariant inclusions 
\begin{equation}\label{embeddingsClosed}
\pi(\alpha) \hookrightarrow LA(\alpha) \quad \text{and} \quad \pi(\beta) \hookrightarrow LA(\beta).
\end{equation}

 Then Berger and Breuil (see \cite{BerBre}, \cite[Sec. 7.2]{BerBreuilNotes}) construct a natural $GL_2(\Q_p)$-equivariant map  from the amalgamated direct sum  $$LA(\alpha) \oplus_{\pi(\beta)}LA(\beta) \rightarrow \mathbf{B}(V)_{\la}$$ and conjecture that it is an isomorphism if $\val_p(\alpha)<k-1$. This conjecture is now proved by Liu \cite[Thm. 4.1]{Liu}. More precisely Liu constructs a map 
 \begin{equation}\label{map:F}
 \mathcal{F}: (LA(\alpha) \oplus_{\pi(\beta)}LA(\beta))^{\ast} \rightarrow D_{\rig}^{\natural}(\check{V})\boxtimes \mathbf{P}^1
 \end{equation}

and shows that this is an isomorphism. This proves that
\begin{equation}
LA(\alpha) \oplus_{\pi(\beta)}LA(\beta) \cong \mathbf{B}(V)_{\la}.
\end{equation}
Further by Liu, we have a $GL_2(\Q_p)$-equivariant commutative diagram

\[	
\begin{CD}
(B(\alpha)/L(\alpha)=\mathbf{B}(V))^{\ast}     @>{\cong}>>  D^{\natural}(\check{V})\boxtimes \mathbf{P}^1\\
@VV{~}V       @VV{~}V\\
(LA(\alpha) \oplus_{\pi(\beta)}LA(\beta))^{\ast}   @>{\cong}>>  D_{\rig}^{\natural}(\check{V})\boxtimes \mathbf{P}^1
\end{CD}
\]
\begin{remark}
	Note that the locally analytic principal series $LA(\alpha)$ and $LA(\beta)$  are admissible locally analytic representations (cf. Example $1.18$ of \cite{Liu}).
\end{remark}
\section{Rigid analytic vectors of automorphic representations}\label{sec:rigidbasic}
In this section, we recall the basic notions in the theory of rigid (globally) analytic  $p$-adic representations.  In the following, we will follow the exposition of \cite{Emertonbook}. We assume basic notions of rigid analytic geometry from \cite{Bosch1} and non  archimedean functional analysis from \cite{NFA}.
\begin{definition}
	Let $U$ be a Hausdorff locally convex topological $L$-vector space. We say that $U$ is a $FH$-space if it admits a complete metric that induces a locally convex topology on $U$ finer than its given topology. We refer to the topological vector space structure on $U$ induced by such a metric as a latent Fréchet space structure on $U$. If furthermore, this latent Fréchet structure is defined by a norm, then we say that $U$ is a $BH$-space. 
\end{definition}
Any $L$-Banach space is, of course, a $BH$-space. Let $\XX$ be an affinoid  rigid analytic space defined over $K$. Let $\CA(\XX,L)$ denote the $L$-Banach algebra of $L$-valued rigid analytic functions defined on $\XX$. If $U$ is a $L$-Banach space, then we define the $L$-Banach space $\CA(\XX,U)$ of $U$-valued rigid analytic functions on $\XX$ to be the completed tensor product $\CA(\XX,L) \widehat{\otimes}_L U$. If $U$ is a Hausdorff locally convex topological $L$-vector space, then we define the locally convex space $\CAXU$ of $U$-valued rigid analytic functions on $\XX$ to be the locally convex inductive limit of Banach spaces $$\CAXU:=\varinjlim \CA(\XX, \closure{W}),$$
where $W$ runs over the directed set of all $BH$-subspaces $W$ of $U$. Furthermore, if $U$ is Fréchet, then also $\CAXU \cong \CA(\XX,L) \widehat{\otimes}_L U$ \cite[Prop. 2.1.13]{Emertonbook}. We say that $\XX$ is $\sigma$-affinoid if there is an increasing sequence $\XX_1 \subset \XX_2 \subset \cdots \subset \XX_n \subset \cdots $ of affinoid open subsets of $\XX$ such that $\XX = \cup_{n=1}^{\infty}\XX_n$ with $\{\XX_n\}_{n \geq 1}$ forming an admissible cover of $\XX$. Of course, an affinoid space is $\sigma$-affinoid. The basic example of a $\sigma$-affinoid rigid-analytic space, which is not affinoid, is an open ball which is a union of an increasing sequence of closed balls. (This notion of $\sigma$-affinoid space will be useful in our next section where we will find our rigid analytic vectors of $\mathbf{B}(V)$ under the action of a $\sigma$-affinoid rigid analytic subgroup of $GL_2(\Z_p)$). If $\XX$ is $\sigma$-affinoid and $U$ is a Hausdorff locally convex $L$-vector space then we define the convex $L$-vector space $\CAXU$ to be the projective limit $\varprojlim_{\mathds{Y}}\CA(\mathds{Y},U)$ where $\mathds{Y}$ runs over all admissible affinoid open subsets of $\XX$. If $U$ is a Fréchet
space and $\XX$ is $\sigma$-affinoid, then Emerton   also  shows that $\CAXU \cong \CA(\XX,L) \widehat{\otimes}_L U$ \cite[Prop. 2.1.19]{Emertonbook}.

Suppose $\GG$ is an affinoid rigid analytic group and suppose that the group of $K$-rational points $G:=\GG(K)$ is Zariski dense in $\GG$. Let $U$ be a $L$-Banach space with a topological action of $G$. We define the $\GG$-(rigid) analytic vectors $\UGan$ of $U$ to be the subspace of those vectors $u$ for which the orbit map $g \mapsto gu$ is an element of $\CAGU$. If $U$ is arbitrary convex $L$-vector space, then $\UGan=\varinjlim_W\closure{W}_{\GG-\an}$ where $W$ runs over all $G$-invariant $BH$-subspaces of $U$. If $U$ is Fréchet, then $\UGan \rightarrow \CAGU$ is a closed embedding and, in particular, $\UGan$ is again a Fréchet space. As before, if $\GG$ is $\sigma$-affinoid and $U$ is Hausdorff locally convex $L$-vector space equipped with a topological $\GG$-action, then we define $\UGan:=\varprojlim U_{\mathds{H}-\an}$, where the projective limit is taken over all admissible affinoid open subgroups $\mathds{H}$ of $\GG$. In particular, if $\mathds{H}_1 \subset \mathds{H}_2$ is an inclusion of admissible open affinoid subgroups of $\GG$, then the natural map $U_{\mathds{H}_2-\an} \rightarrow U_{\mathds{H}_1-\an} $ is an injection, because when composed with the natural injection $U_{\mathds{H}_1-\an}  \rightarrow U$, yields the natural injection $U_{\mathds{H}_2-\an}  \rightarrow U$. The representation $U$ is called a rigid analytic $\GG(K)$-representation if the natural inclusion map $\UGan \rightarrow U$ is a bijection.
\subsection{Passage from rigid analytic to locally analytic vectors}
Suppose $\GG$ is a locally $K$-analytic group. Suppose that $(\phi, H, \mathds{H})$ is a chart of $G$, that is $H$ is a compact open subgroup of $G$, $\mathds{H}$ is an affinoid rigid analytic space over $L$ isomorphic to a closed ball and $\phi: H \xrightarrow{\cong}\mathds{H}(K)$ with the additional property that $H$ is a subgroup of $G$. We refer to such a chart an analytic open subgroup. Let $U$ be a Hausdorff locally convex topological $L$-vector space equipped with a topological $G$-action, then for each analytic open subgroup $H$ of $G$ we can form the convex space $U_{\mathds{H}-\an}$. Then the locally analytic vectors of $U$ is the locally convex inductive limit $$U_{\la}= \varinjlim_H U_{\mathds{H}-\an},$$
where $H$ runs over all analytic open subgroups of $G$. Suppose $U=\varinjlim_n U_n$ is a locally analytic representation of $G$, where $U_n$ are $L$-Banach representations of $G$ with injective and compact transition maps. Then Emerton shows that $U$ is admissible (in the sense of Schneider and Teitelbaum) if and only if $(U_{\mathds{H}-\an})_b^{\prime}$ is finitely generated as a $\mathcal{D}^{\an}(\mathds{H},L)$-module for any cofinal sequence of analytic open subgroups $\mathds{H}$ \cite[Chapter 6]{Emertonbook}. Here $\mathcal{D}^{\an}(\mathds{H},L)$ is the rigid analytic distributions on $\mathds{H}$ (the dual of $\CA(\mathds{H},L)$).

\section{Wide open congruence subgroups and distribution algebra}\label{sec:wideopen}
Emerton introduced the notion of good analytic open subgroups in \cite[Sec. 5.2]{Emertonbook}. In the case of congruence subgroups of $GL_2(\Z_p)$, these are termed as wide open by Patel, Schmidt and Strauch \cite{PSS}. In the following we follow the presentation in \cite{PSS} and we recall few properties of rigid analytic distribution algebras of wide open congruence subgroups. Let $n \geq 0$ be a non-negative integer. We set $$\Gbf(0)=\Gbf = GL_2(\Z_p)= \Spec(\Z_p[a,b,c,d,1/\Delta]),$$ where $\Delta=ad-bc$; comultiplication given by the usual formula. Define the affine group scheme $\Gbf(n)$ over $\Z_p$ by setting $\OO(\Gbf(n))=\Z_p[a_n,b_n,c_n,d_n,1/\Delta_n].$ Here $$\Delta_n=(1+p^na_n)(1+p^nd_n)-p^{2n}b_nc_n,$$ where $a_n, b_n, c_n, d_n$ denote indeterminants. The group scheme homomorphism $\Gbf(n) \rightarrow \Gbf(n-1)$ given on the level of algebras are $$a_{n-1} \mapsto pa_n, b_{n-1} \mapsto pb_n, c_{n-1} \mapsto pc_n, d_{n-1} \mapsto pd_n,$$
if $n >1$. For $n=1$, we set $$a \mapsto 1+pa_1, b \mapsto pb_1,c \mapsto pc_1,d \mapsto 1+pd_1.$$
If $R$ is a flat $\Z_p$-algebra, the homomorphism $\Gbf(n) \rightarrow \Gbf(0)=\Gbf$ gives an isomorphism $\Gbf(n)(R)$ with $\Gbf(R)$ and $$\Gbf(n)(R)=\Big\{ \left( \begin{matrix} a & b \\  c & d \end{matrix} \right) \in \Gbf(R) \mid a-1,b,c,d-1 \in p^nR     \Big\},$$
where we make a formal identification by setting $a=1+p^na_n, b=p^nb_n, c=p^nc_n$ and $d=1+p^nd_n$.  Let $\widehat{\Gbf}(n)$ be the completion of $\Gbf(n)$ along its special fiber $\Gbf(n)_{\mathbb{F}_p}$ which is a formal group scheme over $\mathrm{Spf}(\Z_p)$. We denote its generic fiber in the sense of rigid analytic geometry \cite{Bosch1} by $\GG(n)$. This is an affinoid rigid analytic group. For a finite extension $K$ over $\Q_p$, its $K$-points  are 
\begin{equation}\label{eq:RigidCongKer}
\GG(n)(K)= \Big\{ \left( \begin{matrix} a & b \\  c & d \end{matrix} \right)  \mid a-1,b,c,d-1 \in p^n\OO_K    \Big\}.
\end{equation}
Let $\widehat{\Gbf(n)}^{\circ}$ be the completion of $\Gbf(n)$ in the closed point corresponding to the unit element in $\Gbf(n)_{\mathbb{F}_p}$. This is again a formal scheme over  $\mathrm{Spf}(\Z_p)$. Its generic fiber is the so-called `wide open' rigid analytic group which we denote by $\Gnnot$. We have $$\Gnnot(K)= \Big\{ \left( \begin{matrix} a & b \\  c & d \end{matrix} \right)  \mid a-1,b,c,d-1 \in p^n\mathfrak{m}_{\OO_K}   \Big\}$$
We can identify $\Gnnot$ with the rigid analytic four dimensional open polydisc $(\mathds{B}^{\circ})^4$ (which is a $\sigma$-affinoid space) via coordinates of the second kind
$$(t_1,t_2,t_3,t_4) \mapsto \mathrm{exp}(t_1p^ne)\mathrm{exp}(t_2p^nh_1)\mathrm{exp}(t_3p^nh_2)\mathrm{exp}(t_4p^nf),$$
where $$e=\left( \begin{matrix}0 & 1 \\  0 & 0 \end{matrix} \right), h_1= \left( \begin{matrix} 1 & 0 \\  0 & 0 \end{matrix} \right), h_2=\left( \begin{matrix} 0 & 0 \\  0 & 1 \end{matrix} \right), f= \left( \begin{matrix} 0 & 0 \\  1 & 0 \end{matrix} \right)$$
are elements of the enveloping algebra $U(\mathfrak{g})$ with $\mathfrak{g} = \Lie(GL_2(\Q_p))$.  Then,  we can identify functions on $\Gnnot$ as functions on $(\mathds{B}^{\circ})^4$ via pull-back. This justifies that $\Gnnot$ is `good analytic' in the sense of Emerton \cite[Sec. 5.2]{Emertonbook}. By Proposition $5.2.6$ of \cite{Emertonbook} (see also Remark 2.4.4 of \cite{PSS}), we know that the distribution algebra $\DanGnnotL$ is of compact type and can be written as an inductive limit 
$$\DanGnnotL = \varinjlim_m \DanGnnotL^{(m)}$$ where each $\DanGnnotL^{(m)}$ 
is a noetherian $L$-Banach algebra; hence $\DanGnnotL$ is a coherent ring. This coherent property of $\DanGnnotL$ makes it interesting to study the category of finitely presented modules (which now becomes an abelian category) over $\DanGnnotL$. Following ideas of Schneider and Teitelbaum of studying coadmissible modules over the locally analytic distribution algebra of compact $p$-adic Lie groups, Emerton, Patel, Schmidt and Strauch have also studied the category of modules $M$ over $\DanGnnotL$ which are of the form $M\cong \varinjlim_m M_m$ where each $M_m$ is finitely generated over the noetherian ring $\DanGnnotL^{(m)}$ with compatibility condition $$M_{m^{\prime}} \cong \DanGnnotL^{(m^{\prime})} \otimes_{\DanGnnotL^{(m)}} M_m \quad \text{ for } m^{\prime} >m,$$ (see Lemma $A.6$ of \cite{EmertonJac} and Section $5$ of \cite{PSS2}). Finally, we jot down some useful properties of $\DanGnnotL$ which will be useful in the next section when we find $\Gnnot$-analytic vectors of $\mathbf{B}(V)$ when $V$ is irreducible, crystalline, Frobenius semisimple with distinct Hodge-Tate weights. The following facts can be found in the Appendix of Emerton's paper \cite{EmertonJac}. 
\begin{proposition}\label{prop:finitepresented}
	If $U$ is an admissible locally analytic $GL_2(\Z_p)$-representation over $L$. Then 
	\begin{enumerate}
		\item $(U_{\Gnnot-\an})^{\prime}$ is finitely presented as a $\DanGnnotL$-module.
		\item There is a natural isomorphism $(U_{\Gnnot-\an})^{\prime}  \cong \DanGnnotL \widehat{\otimes}_{\mathcal{D}^{\la}(\Gnnot(\Q_p),L)} U^\prime$.
	\end{enumerate}
\end{proposition}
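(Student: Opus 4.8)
The plan is to reduce everything to the finite (affinoid) level, where Emerton's criterion for admissibility applies, and then pass to the limit using the ring-theoretic structure of $\DanGnnotL$ recorded in Section \ref{sec:wideopen}. First I would replace $GL_2(\Z_p)$ by the open compact subgroup $\Gnnot(\Q_p)$, which is harmless since $U$ is admissible over $GL_2(\Z_p)$ if and only if it is admissible over any open compact subgroup. Then, writing $\Gnnot$ as the increasing admissible union of affinoid open subgroups $\mathds{H}_1 \subset \mathds{H}_2 \subset \cdots$ with $\DanGnnotL = \varinjlim_m \DanGnnotL^{(m)}$, where $A_m := \DanGnnotL^{(m)} \cong \mathcal{D}^{\an}(\mathds{H}_m,L)$ is a noetherian $L$-Banach algebra and the transition maps $A_m \to A_{m+1}$ are flat, we have $U_{\Gnnot-\an} = \varprojlim_m U_{\mathds{H}_m-\an}$ by the very definition of analytic vectors for a $\sigma$-affinoid group, and hence, dualizing, $(U_{\Gnnot-\an})' = \varinjlim_m (U_{\mathds{H}_m-\an})'$.

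For part (2), the key step is the base-change identification at each finite level: $(U_{\mathds{H}_m-\an})'_b \cong A_m \,\widehat{\otimes}_{\mathcal{D}^{\la}(\Gnnot(\Q_p),L)}\, U'$, taken along the natural algebra map $\mathcal{D}^{\la}(\Gnnot(\Q_p),L) \to A_m$. This is the analytic-level analogue of the Schneider--Teitelbaum formula $(U_{\la})' \cong \scriptD^{\la}(G,L)\otimes_{L[[G]]}U'$ of \cite[Thm. 7.1]{Algebras}, and I would prove it by the same mechanism: realize $U_{\mathds{H}_m-\an}$ as the closed $\Gnnot(\Q_p)$-subrepresentation of $\mathcal{C}^{\an}(\mathds{H}_m,U) \cong \mathcal{C}^{\an}(\mathds{H}_m,L)\,\widehat{\otimes}_L\, U$ cut out by the orbit-map relation $f(g)=g\cdot f(1)$, and dualize, so that the two $\Gnnot(\Q_p)$-actions become identified and produce the displayed tensor relation, exactly as in \cite[Ch. 5]{Emertonbook}. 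Then I would pass to $\varinjlim_m$, using that $\widehat{\otimes}$ is compatible with the compact-type inductive limit $\DanGnnotL = \varinjlim_m A_m$, to get $(U_{\Gnnot-\an})' \cong \DanGnnotL \,\widehat{\otimes}_{\mathcal{D}^{\la}(\Gnnot(\Q_p),L)}\, U'$.

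For part (1), set $M := (U_{\Gnnot-\an})' = \varinjlim_m M_m$ with $M_m := (U_{\mathds{H}_m-\an})'$. By Emerton's characterization of admissibility recalled at the end of Section \ref{sec:rigidbasic} (\cite[Ch. 6]{Emertonbook}), each $M_m$ is finitely generated over the noetherian ring $A_m$, and from part (2) applied at each level, together with flatness of $A_m \to A_{m'}$, one gets $M_{m'} \cong A_{m'}\otimes_{A_m}M_m$ for $m'>m$ --- that is, $M$ is a coadmissible-type module over $\DanGnnotL$ in the sense of \cite[Sec. 5]{PSS2}. To upgrade this to finite presentation I would argue as follows: choose a level $m_0$ and generators $e_1,\dots,e_r$ of $M_{m_0}$ over $A_{m_0}$, whose images then generate $M$ over $\DanGnnotL$; since $A_{m_0}$ is noetherian, the kernel $K_{m_0}$ of $A_{m_0}^{\,r}\twoheadrightarrow M_{m_0}$ is finitely generated, say by $f_1,\dots,f_s$; by flatness of $A_{m_0}\to A_m$ the kernel of $A_m^{\,r}\twoheadrightarrow M_m$ equals $A_m\otimes_{A_{m_0}}K_{m_0}$, so taking the exact inductive limit over $m$ shows that $f_1,\dots,f_s$ generate the kernel of $\DanGnnotL^{\,r}\twoheadrightarrow M$. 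Hence $M$ is finitely presented. (The coherence of $\DanGnnotL$ is what makes this conclusion structurally useful, since finitely presented modules over it then form an abelian category.)

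The main obstacle --- essentially the only non-formal step --- is the finite-level base-change identification feeding part (2): showing that the dual of the $\mathds{H}_m$-analytic vectors of an admissible representation genuinely has the displayed form, and that forming $\varinjlim_m$ is compatible with the completed tensor products and coadmissible structures involved. This rests on the good analytic (wide open) structure of $\Gnnot$ --- concretely, the coordinates of the second kind of Section \ref{sec:wideopen}, which make $\DanGnnotL$ of compact type with noetherian pieces --- together with the exactness properties of $\widehat{\otimes}$ for compact-type and nuclear Fréchet spaces and Emerton's analysis of analytic vectors of admissible representations in \cite[Ch. 5, 6]{Emertonbook}. All of this is carried out in the Appendix of \cite{EmertonJac}, which is the reference I would follow.
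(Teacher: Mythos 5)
The paper offers no proof of this proposition. Immediately before stating it, the author writes ``The following facts can be found in the Appendix of Emerton's paper \cite{EmertonJac},'' and records it (together with Proposition \ref{prop:exactness}) purely as a black-box import. There is therefore no in-paper argument to weigh your sketch against; the correct benchmark is Emerton's appendix itself, which you correctly identify at the end of your proposal as the reference you would ultimately follow.

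As a reconstruction of that argument, your sketch has the right shape and engages with the right structural facts: reduce to affinoid levels $\mathds{H}_m$; invoke Emerton's characterization of admissibility (recalled at the end of Section \ref{sec:rigidbasic}) to obtain finite generation of each $(U_{\mathds{H}_m-\an})'$ over the noetherian Banach algebra $A_m=\DanGnnotL^{(m)}$; establish the affinoid-level base-change identity $(U_{\mathds{H}_m-\an})'\cong A_m\,\widehat{\otimes}_{\mathcal{D}^{\la}(\Gnnot(\Q_p),L)}U'$ as the rigid-analytic analogue of the Schneider--Teitelbaum duality; then pass to $\varinjlim_m$ and use noetherianity and flatness of the transitions $A_m\to A_{m'}$ to propagate a finite presentation from a single level to the colimit. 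Be aware of two places where you assert rather than verify: the affinoid-level base-change identity is the genuinely nontrivial input, and you defer it wholesale to \cite[Ch.~5]{Emertonbook}; and the compatibility of $\widehat{\otimes}$ with the compact-type inductive limit $\DanGnnotL=\varinjlim_m A_m$ (needed to deduce the $\sigma$-affinoid statement of part (2) from the affinoid-level one) is stated without a check. Since the paper itself treats the entire proposition as a citation, this level of deferral is no worse than the paper's own treatment, but it does mean your write-up is a faithful roadmap to Emerton's proof rather than a self-contained argument.
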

\begin{proposition}\label{prop:exactness}
	 The functor $U \longmapsto U_{\Gnnot-\an}$ on the category of admissible locally analytic $GL_2(\Z_p)$-representations is exact in the strong sense (that is, it takes an exact sequence of admissible locally analytic representations to a strict exact sequence of nuclear Fréchet spaces).
\end{proposition}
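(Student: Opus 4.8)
The plan is to reduce the statement, by duality, to a flat base-change assertion for distribution algebras, and then read off the topological conclusion from the standard module-theoretic dictionaries. Write $H_0 := \Gnnot(\Q_p)$; taking $K = \Q_p$ in the description of $\Gnnot(K)$ in Section~\ref{sec:wideopen} identifies $H_0$ with the $(n+1)$-st principal congruence subgroup of $GL_2(\Z_p)$, hence a compact open subgroup, and recall that admissibility as a $GL_2(\Z_p)$-representation is the same as admissibility as an $H_0$-representation. It suffices to treat a short exact sequence $0 \to U_1 \to U_2 \to U_3 \to 0$ of admissible locally analytic $GL_2(\Z_p)$-representations over $L$. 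All three spaces are of compact type, so the maps are automatically strict, and passing to strong duals yields a short exact sequence $0 \to U_3' \to U_2' \to U_1' \to 0$ of coadmissible $\scriptD^{\la}(H_0,L)$-modules; exactness here is the exactness of the Schneider--Teitelbaum anti-equivalence between admissible locally analytic representations and coadmissible modules over the Fr\'echet--Stein algebra $\scriptD^{\la}(H_0,L)$.

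By Proposition~\ref{prop:finitepresented}(2), after dualizing, the functor $U \mapsto U_{\Gnnot-\an}$ becomes the base-change functor $M \mapsto \DanGnnotL \widehat{\otimes}_{\scriptD^{\la}(H_0,L)} M$ on coadmissible $\scriptD^{\la}(H_0,L)$-modules, and by Proposition~\ref{prop:finitepresented}(1) it lands in finitely presented $\DanGnnotL$-modules. So the crux is the exactness of $\DanGnnotL \widehat{\otimes}_{\scriptD^{\la}(H_0,L)}(-)$ on coadmissible modules. For this I would fix a cofinal sequence of radii $r_1 < r_2 < \cdots$ presenting the Fr\'echet--Stein structure $\scriptD^{\la}(H_0,L) = \varprojlim_m \scriptD_{r_m}(H_0,L)$, chosen compatibly with the compact-type presentation $\DanGnnotL = \varinjlim_m \DanGnnotL^{(m)}$ recalled in Section~\ref{sec:wideopen}, so that the canonical map $\scriptD^{\la}(H_0,L) \to \DanGnnotL$ factors through a compatible family of continuous algebra maps $\scriptD_{r_m}(H_0,L) \to \DanGnnotL^{(m)}$; this interleaving, together with the identity
$$\DanGnnotL \widehat{\otimes}_{\scriptD^{\la}(H_0,L)} M \;\cong\; \varinjlim_m \Big( \DanGnnotL^{(m)} \otimes_{\scriptD_{r_m}(H_0,L)} M_{r_m}\Big), \qquad M_{r_m} := \scriptD_{r_m}(H_0,L) \otimes_{\scriptD^{\la}(H_0,L)} M,$$
is the content of the comparison in the appendix of \cite{EmertonJac} (cf.\ also \cite{PSS}). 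Three exactness inputs then combine: for a short exact sequence of coadmissible modules, each level functor $M \mapsto M_{r_m}$ is exact (Schneider--Teitelbaum flatness of $\scriptD_{r_m}(H_0,L)$ over $\scriptD^{\la}(H_0,L)$ on coadmissible modules); each $\DanGnnotL^{(m)}$ is flat over $\scriptD_{r_m}(H_0,L)$, so $\DanGnnotL^{(m)} \otimes_{\scriptD_{r_m}(H_0,L)}(-)$ is exact on finitely generated modules; and the filtered inductive limit along the injective transition maps is exact. Hence $\DanGnnotL \widehat{\otimes}_{\scriptD^{\la}(H_0,L)}(-)$ is exact, and it preserves finite presentation by Proposition~\ref{prop:finitepresented}(1).

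Applying this to $0 \to U_3' \to U_2' \to U_1' \to 0$ and invoking Proposition~\ref{prop:finitepresented} produces a short exact sequence $0 \to (U_{3,\Gnnot-\an})' \to (U_{2,\Gnnot-\an})' \to (U_{1,\Gnnot-\an})' \to 0$ of finitely presented $\DanGnnotL$-modules. To conclude I would dualize back: since $\DanGnnotL$ is a coherent ring of compact type, the strong dual of a finitely presented $\DanGnnotL$-module with its canonical topology is a nuclear Fr\'echet space, and this assignment is an exact anti-equivalence under which exact sequences of finitely presented modules correspond to \emph{strict} exact sequences of nuclear Fr\'echet spaces --- the analogue, worked out in \cite[\S5]{PSS2} and the appendix of \cite{EmertonJac}, of the Schneider--Teitelbaum dictionary for coadmissible modules (strictness also follows a posteriori from the open mapping theorem, once one knows the sequence of Fr\'echet spaces is algebraically exact). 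Since $\Gnnot$ is $\sigma$-affinoid, $U_{i,\Gnnot-\an}$ is a nuclear Fr\'echet space whose strong dual is $(U_{i,\Gnnot-\an})'$, so we obtain that $0 \to U_{1,\Gnnot-\an} \to U_{2,\Gnnot-\an} \to U_{3,\Gnnot-\an} \to 0$ is strict exact in nuclear Fr\'echet spaces, which is the assertion; for a general exact sequence one decomposes into short exact pieces.

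The step I expect to be the main obstacle is the interleaving in the second paragraph: producing the compatible presentation of $\scriptD^{\la}(H_0,L)$ and $\DanGnnotL$ by Banach algebras and, crucially, the flatness of each Banach-level map $\scriptD_{r_m}(H_0,L) \to \DanGnnotL^{(m)}$. This is where the special geometry of the good/wide-open subgroup $\Gnnot$ enters --- the coordinates of the second kind and the resulting exponential identification of $\Gnnot$ with $(\mathds{B}^{\circ})^4$, which make $\DanGnnotL^{(m)}$ an explicit $p$-adic Banach completion of $U(\mathfrak{g})_L$ amenable to comparison with the $\scriptD_{r_m}(H_0,L)$. Everything else --- the duality for Fr\'echet--Stein algebras, exactness of filtered colimits, and the duality for coherent compact-type algebras --- is formal once these flatness facts, extracted from the appendix of \cite{EmertonJac} and from \cite{PSS}, are in place.
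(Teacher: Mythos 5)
The paper itself offers no proof of this proposition: like Proposition \ref{prop:finitepresented}, it is imported wholesale from the appendix of \cite{EmertonJac} (``The following facts can be found in the Appendix of Emerton's paper''), so there is no in-paper argument against which to check your details. That said, your skeleton is the natural one and is consistent with how results of this type are established in the sources the paper leans on: reduce to short exact sequences, dualize to coadmissible $\scriptD^{\la}(\Gnnot(\Q_p),L)$-modules, use Proposition \ref{prop:finitepresented}(2) to convert $U \mapsto U_{\Gnnot-\an}$ into the base-change functor $M \mapsto \DanGnnotL \widehat{\otimes}_{\scriptD^{\la}(\Gnnot(\Q_p),L)} M$, prove exactness of that base change level-by-level, and dualize back, with strictness coming from the canonical topologies and the open mapping theorem. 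The purely functional-analytic bookkeeping (strictness of morphisms of admissible representations, exactness of $M \mapsto M_{r}$ on coadmissible modules, exactness of filtered colimits, nuclearity and reflexivity of $U_{\Gnnot-\an}$) is fine.

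However, as a self-contained argument the proposal has a genuine gap exactly where you flag it, and that gap is not peripheral: the existence of an interleaved presentation with compatible algebra maps $\scriptD_{r_m}(\Gnnot(\Q_p),L) \to \DanGnnotL^{(m)}$, the flatness of each such map, and the identity rewriting $\DanGnnotL \widehat{\otimes}_{\scriptD^{\la}(\Gnnot(\Q_p),L)} M$ as a colimit of Banach-level tensor products are precisely the content of the appendix that the paper cites; saying they can be ``extracted from \cite{EmertonJac} and \cite{PSS}'' collapses your proof into the same citation the paper makes, with the hardest input unverified. If you want to fill this in, two cautions. First, the $\DanGnnotL^{(m)}$ must be the noetherian Banach algebras of \cite[Prop.~5.2.6]{Emertonbook} attached to the polydisc coordinates, not the affinoid algebras $\mathcal{D}^{\an}(\GG(m),L)$ (which are non-noetherian by Clozel's appendix), and one has to construct the maps from the Schneider--Teitelbaum completions $\scriptD_{r_m}(\Gnnot(\Q_p),L)$ into them --- already checking that the Dirac distributions of $\Gnnot(\Q_p)$ land in a given $\DanGnnotL^{(m)}$, and matching the two families of norms, is where the ``good''/wide-open geometry does real work. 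Second, I do not believe the flatness of $\scriptD_{r_m}(\Gnnot(\Q_p),L) \to \DanGnnotL^{(m)}$ is available off the shelf in the precise form you invoke, so it needs either an actual argument (e.g.\ a graded/filtration comparison of the two completions) or a pinpoint reference; without it, the proposal establishes only left exactness plus the formal reductions, not the proposition.
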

\section{Rigid analytic vectors of crystalline representations}\label{sec:mainheart}
Suppose that $V$ is irreducible, crystalline, Frobenius semisimple with distinct Hodge-Tate weights $0$ and $k-1$. With notations as in Section \ref{subsec: locallycrystalline}, $$\mathbf{B}(V)_{\la} \cong LA(\alpha) \oplus_{\pi(\beta)} LA(\beta).$$
Our goal in this section is to find out (and provide an explicit description of) the $\Gnnot$-analytic vectors of $\mathbf{B}(V)_{\la}$, denoted by $\big(\mathbf{B}(V)_{\la}\big)_{\Gnnot-\an}.$ Let us first search for $\Gnnot$-analytic vectors of $LA(\alpha)$.  Recall that the locally analytic induced principal series $LA(\alpha)$ is $ \big(\Ind_{B(\Q_p)}^{GL_2(\Q_p)} \chi_{\alpha}  \big)_{\la}$ where $\chi_{\alpha}= \unr(\alpha^{-1}) \otimes x^{k-2}\unr(p\beta^{-1}) $. The Iwasawa decomposition \cite[Sec. 3.2.2]{Orlik1} gives
\begin{equation}\label{iwasawaDecomposition}
 \big(\Ind_{B(\Q_p)}^{GL_2(\Q_p)} \chi_{\alpha}  \big)_{\la} \cong \big(\Ind_{B(\Z_p)}^{GL_2(\Z_p)} \chi_{\alpha}  \big)_{\la}
 \end{equation}
  
as $GL_2(\Z_p)$-equivariant topological isomorphism. Let $I(\Z_p)$ be the Iwahori subgroup $\left( \begin{matrix} \Z_p^\times & p\Z_p \\  \Z_p & \Z_p^\times \end{matrix} \right),$ $W$ be the ordinary Weyl group of $GL_2(\Q_p)$ with respect to $B(\Z_p)$, $P_w(\Z_p)=I(\Z_p) \cap wB(\Z_p)w^{-1}$ for $w \in W$. By the Bruhat-Tits decomposition (\textit{loc.cit} and \cite[Sec. 3.5]{Cartier1}), $$GL_2(\Z_p) = \sqcup_{w \in W} I(\Z_p)wB(\Z_p),$$
we obtain the decomposition 
\begin{equation}\label{eq:decomposition Weyl}
\big(\Ind_{B(\Z_p)}^{GL_2(\Z_p)} \chi_{\alpha}  \big)_{\la}\cong \oplus_{w \in W}\big(\Ind_{P_w(\Z_p)}^{I(\Z_p)}(\chi_{\alpha}^w)\big)_{\la},
\end{equation}
an $I$-equivariant decomposition of topological vector spaces, where the action of $\chi_{\alpha}^w$ is given by $\chi_{\alpha}^w(h)=\chi_{\alpha}(w^{-1}hw)$ with $h \in P_w(\Z_p)$. 


We first consider $w =Id$ and find out $\Gnnot$-analytic vectors in $\big(\Ind_{P(\Z_p)}^{I(\Z_p)}(\chi_{\alpha})\big)_{\la}$; here $P(\Z_p)$ is $P_{Id}(\Z_p)$.  In the following, since we are always over $\Z_p$, by abuse of notation, we will write $I$ for $I(\Z_p)$ and $P$ for $P(\Z_p)$.
The space $$\big(\Ind_P^I(\chi_{\alpha})\big)_{\la}:=\Big\{   f \in  \scriptC^{\la}(I,L): f(gb)=\chi_{\alpha}^{-1}(b)f(g) \Big\},$$
where $g \in I, b \in P$. The action of $I$ on $\big(\Ind_P^I(\chi_{\alpha})\big)_{\la}$ is given by left translation. Let $I(1)$ be the pro-$p$ Iwahori subgroup of $I$. Note that, since $\chi_{\alpha}$ is fixed, the restriction of the functions of $\big(\Ind_P^I(\chi_{\alpha})\big)_{\la}$ to $I(1) \subset I$ is injective. Therefore, we see that the space of $\big(\Ind_P^I(\chi_{\alpha})\big)_{\la}$ is isomorphic to 
$$\big(\Ind_Q^{I(1)}(\chi_{\alpha})\big)_{\la}:=\Big\{   f \in  \scriptC^{\la}(I(1),L): f(gb)=\chi_{\alpha}^{-1}(b)f(g) \Big\},$$
where $Q = P \cap I(1)$. The pro-$p$ Iwahori $I(1)$ has a natural triangular decomposition $I(1)=UQ$ where $ U =  \Big\{\left(\begin{matrix}
1 & 0 \\ z & 1
\end{matrix}\right), z \in \Z_p\Big\}.$
As a vector space, $\big(\Ind_Q^{I(1)}(\chi_{\alpha})\big)_{\la}$ is isomorphic to $\scriptC^{\la}(U,L)\cong \scriptC^\la(\Z_p,L)$. This later identification is given by $f\left(\begin{matrix}
1 & 0 \\ z & 1
\end{matrix}\right) \mapsto f(z)$ where $z \in \Z_p$, identified as points of the rigid analytic closed affinoid unit ball $\mathds{B}^1$. The action of $I(1)$ on $\scriptC^\la(\Z_p,L)$ is given by the following lemma.
\begin{lemma}\label{lem:action}\cite[Lemma 3.3]{Clozel_global_II_published}
	For $y \in \Z_p, x \in p\Z_p, s, t \in 1 +p\Z_p$, we have
	\begin{enumerate}[(i)]
		\item $\left(\begin{matrix}
			1 & 0 \\y & 1
		\end{matrix}\right) f(z) =f(z-y)$,
		\item $\left(\begin{matrix}
			s & 0 \\ 0 & t
		\end{matrix}\right) f(z) =f(st^{-1})\chi_{\alpha}(s,t),$
		\item $\left(\begin{matrix}
		1 & x \\ 0 & 1
		\end{matrix}\right)f(z)=f(\frac{z}{1-xz})\chi_{\alpha}((1-xz)^{-1},1-xz). $
	\end{enumerate}
\end{lemma}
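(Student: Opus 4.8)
The plan is to derive (i)--(iii) by a direct matrix computation, exploiting the triangular factorization $I(1) = UQ$ together with the definition of the induced action. Recall that $\big(\Ind_Q^{I(1)}(\chi_\alpha)\big)_{\la}$ carries the left-translation action $(g\cdot f)(h) = f(g^{-1}h)$, and that the isomorphism with $\scriptC^\la(\Z_p,L)$ is restriction to $U$, i.e. $f \mapsto \big(z \mapsto f(u(z))\big)$ where $u(z) := \left(\begin{smallmatrix}1 & 0 \\ z & 1\end{smallmatrix}\right)$. The key point I would use is that every $h \in I(1)$ has a \emph{unique} factorization $h = u(z')\, q$ with $z' \in \Z_p$ and $q \in Q$ --- this is precisely the $I(1) = UQ$ decomposition already invoked in the text to identify the underlying vector space --- so that $f(h) = \chi_\alpha^{-1}(q)\, f(z')$, since $f$ transforms by $\chi_\alpha^{-1}$ under right translation by $Q$. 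Hence, for each generator $g$, the whole computation reduces to factoring $g^{-1}u(z) = u(z')\, q$, reading off the $U$-parameter $z'$, and evaluating $\chi_\alpha^{-1}$ on the $Q$-component $q$.

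To carry this out I would treat the three families in turn. For $g = u(y)$ with $y \in \Z_p$, since $U$ is a subgroup one gets $g^{-1}u(z) = u(z-y)$, so $q = 1$ and (i) is immediate. For $g = \mathrm{diag}(s,t)$ with $s,t \in 1+p\Z_p$, a short computation gives $g^{-1}u(z) = \left(\begin{smallmatrix} s^{-1} & 0 \\ t^{-1}z & t^{-1}\end{smallmatrix}\right)$; comparing with $u(z')\left(\begin{smallmatrix} q_{11} & q_{12} \\ 0 & q_{22}\end{smallmatrix}\right)$ forces $q = \mathrm{diag}(s^{-1},t^{-1})$ and determines $z'$, and since $\chi_\alpha$ factors through the diagonal torus one has $\chi_\alpha^{-1}(q) = \chi_\alpha(s,t)$, giving (ii). For $g = \left(\begin{smallmatrix} 1 & x \\ 0 & 1\end{smallmatrix}\right)$ with $x \in p\Z_p$, one computes $g^{-1}u(z) = \left(\begin{smallmatrix} 1-xz & -x \\ z & 1\end{smallmatrix}\right)$, and solving $g^{-1}u(z) = u(z')\left(\begin{smallmatrix} q_{11} & q_{12} \\ 0 & q_{22}\end{smallmatrix}\right)$ yields $z' = z/(1-xz)$ together with $q_{11} = 1-xz$, $q_{12} = -x$, $q_{22} = (1-xz)^{-1}$; evaluating $\chi_\alpha^{-1}$ on the diagonal entries $\big(1-xz,(1-xz)^{-1}\big)$ then produces the factor $\chi_\alpha\big((1-xz)^{-1},1-xz\big)$ of (iii).

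The routine but load-bearing part will be checking that in each case the matrix $q$ actually lies in $Q = P \cap I(1)$ --- diagonal entries in $1+p\Z_p$, upper-right entry in $p\Z_p$ --- which is exactly where the hypotheses $y \in \Z_p$, $s,t \in 1+p\Z_p$, and above all $x \in p\Z_p$ are used; the last guarantees that $1-xz \in 1+p\Z_p$ is a unit for every $z \in \Z_p$, so that the factorization in (iii) exists and $z \mapsto z/(1-xz)$ is a well-defined locally analytic self-map of $\Z_p$. I expect case (iii) to be the only real point of care: unlike the other two, $g^{-1}u(z)$ lies in neither $U$ nor $Q$, so one must check both existence \emph{and} uniqueness of its $UQ$-factorization before the formula even makes sense, and uniqueness is what makes the recipe well-defined. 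Once this congruence bookkeeping is in place, (i)--(iii) follow immediately, and no separate analyticity argument is needed, since the operators involved are restrictions of the translation action on $\scriptC^\la(I(1),L)$, which manifestly preserves local analyticity.
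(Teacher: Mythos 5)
Your proof is correct and is the natural direct computation. The paper itself does not prove this lemma --- it simply cites Clozel's Lemma 3.3 --- so there is no in-text argument to compare against, but your method (left-translation action, unique $UQ$-factorization of $g^{-1}u(z)$, reading off the $U$-coordinate $z'$ and evaluating $\chi_\alpha^{-1}$ on the $Q$-component) is exactly what that computation amounts to. All the congruence checks you flag as load-bearing do go through: $q=\mathrm{diag}(s^{-1},t^{-1})\in Q$ since $s,t\in 1+p\Z_p$, and in (iii) $q=\left(\begin{smallmatrix}1-xz & -x\\ 0 & (1-xz)^{-1}\end{smallmatrix}\right)\in Q$ because $x\in p\Z_p$ forces $1-xz\in 1+p\Z_p$, which also makes $z\mapsto z/(1-xz)$ a well-defined self-map of $\Z_p$; and uniqueness of the factorization is immediate from $U\cap Q=\{1\}$. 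One small remark: your computation in (ii) actually produces $z'=st^{-1}z$, so the right-hand side should read $f(st^{-1}z)\,\chi_\alpha(s,t)$ --- the $z$ appears to have been dropped in the statement as printed, and your argument implicitly supplies the corrected form.
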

We want to find the subspace  $\big(\Ind_Q^{I(1)}(\chi_{\alpha})_{\la}\big)_{\Gnnot-\an}$ where $\Gnnot$ is a $\sigma$-affinoid rigid analytic space defined in Section \ref{sec:wideopen}. Therefore, by Section \ref{sec:rigidbasic}, we have
$$\big(\Ind_Q^{I(1)}(\chi_{\alpha})_{\la}\big)_{\Gnnot-\an} \cong \varprojlim_{m>n}\big(\Ind_Q^{I(1)}(\chi_{\alpha})_{\la}\big)_{\mathds{G}(m)-\an} $$
Let $\UU(m)$ and $\UU$ be the rigid analytic affinoids such that $U(m):=\UU(m)(\Z_p) =\GG(m)(\Z_p) \cap \UU(\Z_p)$ where $\UU(\Z_p)=U$.
(Here $\GG(m)$ is  as in equation \eqref{eq:RigidCongKer}). Note that, we have realized $\big(\Ind_Q^{I(1)}(\chi_{\alpha})\big)_{\la}$ as a space of locally analytic functions on $U$. Let $ f \in \scriptC^\la(U,L)$. Suppose $f \in \big(\scriptC^\la(U,L)\big)_{\UU(m)-\an}$, then $u \mapsto uf \in \scriptC^\an(\UU(m),\scriptC^\la(U,L)) $ where $ u \in U(m)$. So $u \mapsto uf(1)$ is in $\scriptC^\an(\UU(m),L)$, because evaluation at $1 \in U(m)$ is a continuous map. Now, by part (i) of Lemma \ref{lem:action}, this gives that $u \mapsto f(u)$  is in $\scriptC^\an(\UU(m),L)$. Hence 
$\big(\Ind_Q^{I(1)}(\chi_{\alpha})_{\la}\big)_{\mathds{G}(m)-\an} $ can be identified with a subspace of $\scriptC^\la(U,L) \cap \scriptC^\an(\UU(m),L)$. In the following we will show that any member of 
$\scriptC^\la(U,L) \cap \scriptC^\an(\UU(m),L)$ is actually $\GG(m)$-analytic by using Lemma \ref{lem:action}.
\begin{proposition}\label{prop:UmAnalytic}
	We have $\big(\Ind_Q^{I(1)}(\chi_{\alpha})_{\la}\big)_{\mathds{G}(m)-\an} \cong \scriptC^\la(U,L) \cap \scriptC^\an(\UU(m),L)$
\end{proposition}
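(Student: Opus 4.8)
The plan is to prove the one inclusion that is not yet in place, namely that every $f$ in $\scriptC^\la(U,L)\cap\scriptC^\an(\UU(m),L)$ — a locally analytic function on $U\cong\Z_p$ whose restriction to each residue disc $j+p^m\Z_p$ agrees with a single power series converging there — is a $\GG(m)$-analytic vector; the reverse inclusion was obtained in the paragraph preceding the statement. Concretely, one must exhibit a rigid analytic function on the affinoid $\GG(m)$, taking values in a single Banach subspace of $\scriptC^\la(U,L)$, that extends the orbit map $g\mapsto g\cdot f$ from $\Z_p$-points.

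The first step is to make the action explicit. For a $\Z_p$-point $g=\left(\begin{smallmatrix}a&b\\c&d\end{smallmatrix}\right)$ of $\GG(m)$ one decomposes $g^{-1}\left(\begin{smallmatrix}1&0\\z&1\end{smallmatrix}\right)$ inside the triangular factorization $I(1)=UQ$ and applies the transformation rule $f(uq)=\chi_\alpha^{-1}(q)f(u)$; since both unramified characters occurring in $\chi_\alpha$ are trivial on $1+p\Z_p$, only the algebraic part $x^{k-2}$ survives, and one arrives at the closed formula
\[ (g\cdot f)(z)=(d-bz)^{k-2}\,f\!\left(\tfrac{az-c}{d-bz}\right),\qquad z\in\Z_p, \]
which specializes to Lemma \ref{lem:action} on the three standard one–parameter subgroups. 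As the entries $a-1,b,c,d-1$ lie in $p^m\OO$, the factor $d-bz$ is a unit and the right-hand side is manifestly rigid analytic in $g$, so it is the natural candidate for the required extension of the orbit map; the decisive elementary observation is that the Möbius substitution $z'=z'(g,z):=\tfrac{az-c}{d-bz}$ satisfies $z'\equiv z\pmod{p^m}$, so $z\mapsto z'$ carries each residue disc $j+p^m\Z_p$ to itself. In particular $g\cdot f$ again lies in $\scriptC^\an(\UU(m),L)$ for $g\in\GG(m)(\Z_p)$, so that $\scriptC^\an(\UU(m),L)$ is a $\GG(m)(\Z_p)$-stable closed $BH$-subspace of $\scriptC^\la(U,L)$ through which the orbit map is expected to factor.

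The second step is to check that $g\mapsto g\cdot f$ is rigid analytic into $\scriptC^\an(\UU(m),L)$, which one does one residue disc at a time. On $j+p^m\Z_p$ write $z=j+p^mz_0$ with $z_0\in\Z_p$; clearing the unit denominator gives $z'=j+p^m\xi(g,z_0)$ for some $\xi\in\scriptC^\an(\GG(m)\times\mathds{B}^1,L)$ with $\|\xi\|_{\sup}\le1$. Expanding the power series $f(w)=\sum_{n\ge0}c_n^{(j)}(w-j)^n$ of $f$ on that disc, where $|c_n^{(j)}|\,p^{-mn}\to0$, yields $f(z')=\sum_{n\ge0}\bigl(c_n^{(j)}p^{mn}\bigr)\,\xi(g,z_0)^n$, a series converging in the supremum norm of $\scriptC^\an(\GG(m)\times\mathds{B}^1,L)$ and hence defining a rigid analytic function there; multiplying by the polynomial factor $(d-bz)^{k-2}$ shows $(g,z_0)\mapsto(g\cdot f)(j+p^mz_0)$ is rigid analytic on $\GG(m)\times\mathds{B}^1$. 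Assembling over the finitely many discs $j\in\Z_p/p^m\Z_p$ and using $\scriptC^\an(\GG(m)\times\mathds{B}^1,L)\cong\scriptC^\an(\GG(m),L)\,\widehat{\otimes}_L\,\scriptC^\an(\mathds{B}^1,L)$, one obtains that $g\mapsto g\cdot f$ lies in $\scriptC^\an\bigl(\GG(m),\scriptC^\an(\UU(m),L)\bigr)$, which is exactly the assertion that $f$ is a $\GG(m)$-analytic vector.

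The main obstacle — and the reason $\GG(m)$, rather than a coarser subgroup, is the natural group against which to test analyticity — is precisely the congruence $z'\equiv z\pmod{p^m}$: it guarantees that the Möbius perturbation produced by the group action never moves a point outside the polydisc of radius $p^{-m}$ on which $f$ genuinely converges, so that substituting $z'$ into the power series of $f$ does not destroy convergence. Once this is in hand, the remaining points — the bounds $\|\xi\|_{\sup}\le1$ and $|c_n^{(j)}p^{mn}|\to0$ forcing convergence in the relevant Banach norm, and the passage from joint analyticity in $(g,z_0)$ to analyticity of the orbit map into the Banach space $\scriptC^\an(\UU(m),L)$ — are routine.
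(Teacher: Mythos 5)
Your proof is correct and reaches the same conclusion, but takes a structurally different and arguably cleaner route than the paper's. The paper proves the statement by working one one-parameter subgroup at a time: it expands $f(z)=\sum_l a_l z^l$ on the single disc $p^m\Z_p$ and verifies separately, for the lower unipotent, the two torus factors, and the upper unipotent, that the coefficients $f_v$ in the resulting expansion satisfy $\val_{\scriptC}(f_v)+mv\to\infty$. (The assembly of these four verifications into joint analyticity on $\GG(m)$, via the coordinates of the second kind, is left implicit, as is the treatment of the residue discs $j+p^m\Z_p$ with $j\ne 0$.) You instead derive the global Möbius formula $(g\cdot f)(z)=(d-bz)^{k-2}f\bigl(\tfrac{az-c}{d-bz}\bigr)$, and extract from it the single congruence $z'\equiv z\pmod{p^m}$; that one observation simultaneously handles all four parameters and all residue discs, replacing four separate power-series manipulations by one substitution estimate. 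This is genuinely more efficient: it sidesteps the ``separate-to-joint analyticity'' step, and it makes explicit the disc-by-disc bookkeeping that the paper suppresses by working only with the expansion about $0$.

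Two small points of care, in your favor. First, your reading of $\scriptC^{\la}(U,L)\cap\scriptC^{\an}(\UU(m),L)$ --- as locally analytic functions that are given by a single convergent power series on \emph{each} residue disc $j+p^m\Z_p$ --- is the correct one: a function like $\mathds{1}_{1+p^{m+1}\Z_p}$ is locally analytic and vanishes identically on $p^m\Z_p$, yet fails to be $\GG(m)$-analytic (translate by $y\in p^m\Z_p$ and evaluate at $z=1$), so the naive reading of ``analytic only near $0$'' would make the proposition false. The paper's preceding paragraph, which only composes the orbit map with evaluation at the identity $z=0$, establishes the containment for the disc through $0$; your ``reverse inclusion'' requires the same argument applied with evaluation at $z=j$ for every $j\in\Z/p^m\Z$, which is immediate but worth noting. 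Second, once you have the per-disc congruence, your Banach-norm convergence estimate $\lvert c_n^{(j)}p^{mn}\rvert\to 0$ together with $\lVert\xi\rVert_{\sup}\le 1$ is exactly the same estimate the paper carries out coordinatewise; the two proofs use the identical numerics, packaged differently.
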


 Note that the space $ \scriptC^\an(\UU(m),L)$ is an $L$-Banach space with the following norm. Suppose $f \in  \scriptC^\an(\UU(m),L)$, then  
\begin{equation}\label{eq:Fanalytic}
f(z)=\sum_{l=0}^\infty a_lz^l, \quad a_l \in L,
\end{equation}

 is convergent in the ball $p^m\Z_p$. That is,
 \begin{equation}\label{eq:FanalyticMeaning}
  \val_p(a_l) + ml \rightarrow \infty  \text{ as } l \rightarrow \infty. 
 \end{equation}
 Then, we define the  $\scriptC^\an(\UU(m),L)$-valuation of $f$ by $\val_{\scriptC}(f)= \inf_l\{\val_p(a_l) +ml\}$.
 \begin{proof}[Proof of Proposition \ref{prop:UmAnalytic}]
 Consider the relation $\left(\begin{matrix}
 1 & 0 \\y & 1
 \end{matrix}\right) f(z) =f(z-y)$ where $y \in p^m\Z_p$ and $f \in \scriptC^\an(\UU(m),L)$.
 Then, 
 \begin{align*}
 f(z-y) &= \sum_{l=0}^\infty a_l(z-y)^l = \sum_{l=0}^\infty a_l \sum_{v=0}^l{l \choose v}z^{l-v}(-1)^vy^v\\
 &=\sum_{v=0}^\infty y^v\Big(\sum_{l \geq v}a_l{l \choose v} (-1)^vz^{l-v}\Big) =\sum_{v=0}^\infty y^vf_v.
 \end{align*}
For each fixed $v$, $\val_{\scriptC}(f_v) \geq \inf_{l \geq v}\{\val_p(a_l) +m(l-v)\}= \inf_{l \geq v}\{\val_p(a_l) +ml\}-mv$.  Therefore, by equation \ref{eq:FanalyticMeaning}, $$\val_{\scriptC}(f_v) +mv \geq  \inf_{l \geq v}\{\val_p(a_l) +ml\} \rightarrow \infty \text{ as } v \rightarrow \infty.$$ This implies that the action 
$\left(\begin{matrix}
1 & 0 \\y & 1
\end{matrix}\right) \mapsto \left(\begin{matrix}
1 & 0 \\y & 1
\end{matrix}\right) f$ is analytic on $\UU(m)$, i.e. it belongs to $ \scriptC^\an(\UU(m), \scriptC^\an(\UU(m),L))$.

Next consider the action in part (iii) of Lemma \ref{lem:action}. This is given by $$\left(\begin{matrix}
1 & x \\ 0 & 1
\end{matrix}\right)f(z)=f(\frac{z}{1-xz})\chi_{\alpha}((1-xz)^{-1},1-xz),$$ where now $x ,z \in p^m\Z_p$. Note that the character $\chi_{\alpha}= \unr(\alpha^{-1}) \otimes x^{k-2} \unr(p\beta^{-1})$, $\alpha, \beta \in \OO_L$; the unramified character $\unr(-): \Q_p^\times \rightarrow L^\times$ is given by $\unr(\lambda): y \mapsto \lambda^{\val_p(y)}$. Therefore, for $s, t \in 1+p\Z_p$, $$\chi_\alpha\left(\begin{matrix}
s & x \\ 0 & t
\end{matrix}\right)=t^{k-2}.$$
Hence $\chi_{\alpha}((1-xz)^{-1},1-xz)=(1-xz)^{k-2}$. This being a polynomial in $x$, is always rigid analytic in $x$ when $x$ varies over $\mathds{B}^1(p^m\Z_p)$. So we can forget about the factor $\chi_{\alpha}((1-xz)^{-1},1-xz)$ and it will be enough to show that the action $\left(\begin{matrix}
1 & x \\0 & 1
\end{matrix}\right) \mapsto f(\frac{z}{1-xz})$ is rigid analytic for the action of the upper unipotent rigid analytic subgroup of $\GG(m)$. Now,
\begin{align*}
f(\frac{z}{1-xz})&= \sum_{l=0}^\infty a_l\frac{z^l}{(1-xz)^l}= \sum_{l=0}^{\infty}a_lz^l\sum_{q=0}^\infty{l+q-1 \choose q}x^q z^q\\
&= \sum_{q=0}^\infty x^q \Big( \sum_{l=0}^{\infty}a_l{l+q-1 \choose q }z^{l+q}     \Big)= \sum_{q=0}^\infty x^q f_q,
\end{align*}
where we have used the fact that $(1-v)^{-l} = \sum_{q=0}^\infty {l+q-1 \choose q } v^q$ for $\lvert v \rvert_p <1$. For each $q$, 
\begin{align*}
\val_{\scriptC}(f_q) &\geq \inf_l\{\val_p(a_l) +m (l+q)\}\\
&= \inf_l\{\val_p(a_l) +ml\}+mq \\
&= \val_{\scriptC}(f) +mq
\end{align*}
Therefore, of course, $\val_\scriptC(f_q) +mq \geq \val_\scriptC(f) +2mq \rightarrow \infty \text{ as } q \rightarrow \infty$.

Next, for $ s \in 1+p^m\Z_p$ and $z \in p^m\Z_p$, consider 
\begin{equation*}
\left(\begin{matrix}
s & 0 \\0 & 1
\end{matrix}\right)f(z)=f(sz)\chi_\alpha(s,1)=f(sz).
\end{equation*}
Write $s = 1+s^{\prime}$, where $s^{\prime} \in p^m\Z_p$, and the analyticity has to be checked in the variable $s^\prime$. 
\begin{align*}
f(sz) &= \sum_{l=0}^\infty a_l(z+s^{\prime}z)^l= \sum_{l=0}^{\infty}a_l \sum_{q=0}^l{l \choose q} z^{l-q}(s^{\prime}z)^q\\
&=\sum_{q=0}^\infty (s^{\prime})^q\Big( \sum_{l \geq q}a_l{l \choose q} z^l     \Big)=\sum_{q=0}^\infty (s^{\prime})^qf_q.
\end{align*}
Therefore, $\val_{\scriptC}(f_q) \geq \inf_{l \geq q}\{\val_p(a_l) +ml\} \rightarrow \infty \text{ as } q \rightarrow \infty$ by equation \eqref{eq:FanalyticMeaning}. Hence, obviously, $\val_{\scriptC}(f_q) +mq \rightarrow \infty$ as $q \rightarrow \infty$. 
Finally, consider the action 
$$\left(\begin{matrix}
1 & 0 \\0 & t
\end{matrix}\right)f(z)=f\big(\frac{z}{t}\big)\chi_{\alpha}(1,t),$$
for $t=1+t^{\prime}, t^{\prime} \in p^m\Z_p$.  By the same argument as before, $\chi_{\alpha}(1,1+t^{\prime})$ is a polynomial in $t^{\prime}$ and so it is, of course, analytic when $t^\prime \in \mathds{B}^1(p^m\Z_p).$
\begin{align*}
f\big(\frac{z}{1+t^{\prime}}\big)&=\sum_{l=0}^\infty a_l\frac{z^l}{(1+t^{\prime})^l}= \sum_{l=0}^\infty a_lz^l\sum_{q=0}^\infty {l+q-1 \choose q} (-1)^q (t^\prime)^q\\
&=\sum_{q=0}^\infty (t^\prime)^q\Big( \sum_{l=0}^\infty a_l {l+q-1 \choose q } (-1)^q z^l     \Big)=\sum_{q=0}^\infty (t^\prime)^qf_q.
\end{align*}
Therefore, $\val_{\scriptC}(f_q) \geq \inf_l\{\val_p(a_l) +ml\}=\val_{\scriptC}(f)$. This implies that $ \val_\scriptC(f_q) +mq \rightarrow \infty$ as $q \rightarrow \infty$. This completes the proof 
of Proposition \ref{prop:UmAnalytic}.
\end{proof}
This gives the following corollary.
\begin{corollary}
	\begin{align*}
	\big(\Ind_Q^{I(1)}(\chi_{\alpha})_{\la}\big)_{\Gnnot-\an} &\cong \varprojlim_{m>n} \scriptC^\la(U,L) \cap \scriptC^\an(\UU(m),L) \\
	&\cong \scriptC^{\an}(\UU(n)^\circ, L) \cap \scriptC^\la(U,L).
	\end{align*}

\end{corollary}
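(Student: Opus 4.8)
The plan is to push Proposition \ref{prop:UmAnalytic} through the projective limit that, for the $\sigma$-affinoid group $\Gnnot$, computes its analytic vectors. First I would invoke the identification displayed just above the corollary,
\[
\big(\Ind_Q^{I(1)}(\chi_{\alpha})_{\la}\big)_{\Gnnot-\an}\;\cong\;\varprojlim_{m>n}\big(\Ind_Q^{I(1)}(\chi_{\alpha})_{\la}\big)_{\mathds{G}(m)-\an},
\]
which comes directly from the definition of the space of rigid analytic functions on a $\sigma$-affinoid space and of $\Gnnot$-analytic vectors recalled in Section \ref{sec:rigidbasic}, using (Section \ref{sec:wideopen}) that the affinoid subgroups $\mathds{G}(m)$, $m>n$, form an admissible affinoid cover of $\Gnnot$; the structure maps of this system are the natural continuous injections identifying each term as a subspace of $\Ind_Q^{I(1)}(\chi_{\alpha})_{\la}$, and the limit is simply the intersection of these subspaces, so no subtlety about the topology of the limit arises. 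Then I would substitute Proposition \ref{prop:UmAnalytic} into each term, rewriting $\big(\Ind_Q^{I(1)}(\chi_{\alpha})_{\la}\big)_{\mathds{G}(m)-\an}$ as $\scriptC^\la(U,L)\cap\scriptC^\an(\UU(m),L)$, realised concretely as the subspace of $\scriptC^\la(U,L)$ of locally analytic functions on $U$ whose restriction to $U(m)$ is a single globally convergent power series.

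Before passing to the limit one must check that the isomorphism of Proposition \ref{prop:UmAnalytic} is natural in $m$: the structure map of the system on the left must correspond to the evident ``restrict the domain of validity of the power series'' inclusion between the spaces $\scriptC^\la(U,L)\cap\scriptC^\an(\UU(m),L)$ attached to the nested affinoid discs $\UU(m)$. This should be immediate from the construction carried out in the proof of Proposition \ref{prop:UmAnalytic}, since on both sides the structure map is literally an inclusion of one subspace of $\scriptC^\la(U,L)$ into another and the two subspace inclusions agree; taking $\varprojlim_{m>n}$ of the isomorphism then yields the first displayed isomorphism of the corollary. I expect this naturality step, together with keeping all the projective limits in play indexed over the \emph{same} cofinal family $\{m>n\}$ with compatible structure maps, to be the only part requiring genuine care --- everything else is a direct appeal to Proposition \ref{prop:UmAnalytic} and to the generalities recalled in Sections \ref{sec:rigidbasic} and \ref{sec:wideopen}.

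For the second isomorphism I would use that the wide open disc $\UU(n)^\circ=\bigcup_{m>n}\UU(m)$ is itself $\sigma$-affinoid with $\{\UU(m)\}_{m>n}$ an admissible affinoid cover, so that by the definition of rigid analytic functions on a $\sigma$-affinoid space (Section \ref{sec:rigidbasic}) one has $\scriptC^{\an}(\UU(n)^\circ,L)\cong\varprojlim_{m>n}\scriptC^\an(\UU(m),L)$. It then remains to commute $\varprojlim$ with the intersection against the fixed space $\scriptC^\la(U,L)$; this is formal once every term is viewed as a subspace of $\scriptC^\la(U,L)$, for then the left-hand limit is $\bigcap_{m>n}\big(\scriptC^\la(U,L)\cap\scriptC^\an(\UU(m),L)\big)$, and a locally analytic function on $U$ lies in all of the terms exactly when its restriction to each $\UU(m)$ is rigid analytic, equivalently when its restriction to $\UU(n)^\circ$ is rigid analytic, equivalently when it lies in $\scriptC^\la(U,L)\cap\scriptC^{\an}(\UU(n)^\circ,L)$. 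Combining this with the first isomorphism gives the corollary.
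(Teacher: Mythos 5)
Your proposal is correct and matches the paper's (implicit) reasoning exactly: the paper presents the corollary with the words ``This gives the following corollary'' immediately after Proposition \ref{prop:UmAnalytic}, and the intended derivation is precisely yours --- take the projective limit over the admissible affinoid cover $\{\GG(m)\}_{m>n}$, substitute Proposition \ref{prop:UmAnalytic} termwise, and then identify $\varprojlim_{m>n}\scriptC^\an(\UU(m),L)$ with $\scriptC^{\an}(\UU(n)^\circ,L)$ via the definition of rigid analytic functions on a $\sigma$-affinoid space. The naturality-in-$m$ check you flag is the only point one could conceivably worry about, and you correctly dispose of it by noting that both sides sit inside $\scriptC^\la(U,L)$ with inclusions as transition maps.
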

Recall that the above Corollary gives us the analytic vectors when the Weyl element $w$ is identity. Having found out the analytic vectors for the Weyl orbit $w=Id$, a general argument in appendix A of \cite{Rayglobal} gives us the analytic vectors for all other non-trivial Weyl orbits coming from equation \eqref{eq:decomposition Weyl}.  Hence, it follows that 
\begin{align*}
\Big(\Ind_{P_w(\Z_p)}^{I(\Z_p)}(\chi_{\alpha}^w)_{\la}\Big)_{\Gnnot-\an} &\cong \varprojlim_{m>n} \scriptC^\an(w\UU(m)w^{-1},L) \text{ } \cap \text{ }  \scriptC^\la(wUw^{-1},L) \\
&\cong \scriptC^\an(w\UU(n)^\circ w^{-1},L)\text{ }  \cap \text{ }  \scriptC^\la(wUw^{-1},L),
\end{align*}

where $w\UU(m)w^{-1}$ and $w\UU(n)^{\circ}w^{-1}$ have usual meaning. (They are rigid analytic spaces such that $w\UU(m)w^{-1}(\Z_p)= w\UU(m)(\Z_p)w^{-1}$). 
Therefore, we have shown that the following Theorem holds.
\begin{theorem}\label{thm: Amalytic LA alpha beta}
Let $GA(\alpha)$ be the space of functions  $\oplus_{w \in W} \scriptC^\an(w\UU(n)^\circ w^{-1},L) \text{ } \cap \text{ } \scriptC^\la(wUw^{-1},L)$ where the action of the torus of $\UU(n)^\circ(\Z_p)$ on each of the direct sum is given by $\chi_{\alpha}^w$. Then $GA(\alpha)$ is stable  under the action of $\Gnnot(\Z_p)$ and is a rigid analytic $\Gnnot(\Z_p)$-representation in the sense of Emerton. Furthermore, we have 
	$$\big(LA(\alpha)\big)_{\Gnnot-\an} \cong GA(\alpha)$$	
	as rigid analytic $\Gnnot(\Z_p)$-representations.
\end{theorem}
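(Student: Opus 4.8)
The plan is to assemble Theorem \ref{thm: Amalytic LA alpha beta} from three ingredients already put in place in the text: the Iwasawa decomposition \eqref{iwasawaDecomposition}, the Bruhat–Tits/Weyl decomposition \eqref{eq:decomposition Weyl}, and Proposition \ref{prop:UmAnalytic} together with its Corollary. First I would recall that since taking $\Gnnot$-analytic vectors commutes with finite direct sums (the functor $U \mapsto U_{\Gnnot-\an}$ is left exact and preserves finite limits, hence direct sums), the decomposition \eqref{eq:decomposition Weyl} gives
$$\big(LA(\alpha)\big)_{\Gnnot-\an} \cong \big(\Ind_{B(\Z_p)}^{GL_2(\Z_p)}\chi_\alpha\big)_{\la, \Gnnot-\an} \cong \bigoplus_{w \in W}\Big(\Ind_{P_w(\Z_p)}^{I(\Z_p)}(\chi_\alpha^w)_{\la}\Big)_{\Gnnot-\an},$$
using \eqref{iwasawaDecomposition} for the first isomorphism. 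So it suffices to identify each Weyl summand.

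For the summand $w = \mathrm{Id}$, the Corollary following Proposition \ref{prop:UmAnalytic} already gives
$$\Big(\Ind_Q^{I(1)}(\chi_\alpha)_{\la}\Big)_{\Gnnot-\an} \cong \scriptC^{\an}(\UU(n)^\circ,L) \cap \scriptC^\la(U,L),$$
where the second isomorphism passes from the projective limit over $m>n$ of the Banach spaces $\scriptC^\la(U,L)\cap\scriptC^\an(\UU(m),L)$ to functions analytic on the open polydisc. For general $w$, I would conjugate the whole setup by $w$: the triangular decomposition $I(1)=UQ$ becomes $wI(1)w^{-1} = (wUw^{-1})(wQw^{-1})$, the twisted character $\chi_\alpha^w$ plays the role of $\chi_\alpha$, and the computations of Lemma \ref{lem:action} transport verbatim (the key point being that $\chi_\alpha^w$ restricted to the relevant torus and unipotent parts is still given by polynomial — in fact monomial — expressions, so the convergence estimates in the proof of Proposition \ref{prop:UmAnalytic} go through unchanged after relabelling coordinates). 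Citing the general argument of Appendix A of \cite{Rayglobal} for the bookkeeping, one gets
$$\Big(\Ind_{P_w(\Z_p)}^{I(\Z_p)}(\chi_\alpha^w)_{\la}\Big)_{\Gnnot-\an} \cong \scriptC^\an(w\UU(n)^\circ w^{-1},L) \cap \scriptC^\la(wUw^{-1},L).$$
Summing over $w\in W$ yields the stated description of $GA(\alpha)$.

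It remains to check the representation-theoretic assertions: that $GA(\alpha)$ is stable under $\Gnnot(\Z_p)$ and is a rigid analytic $\Gnnot(\Z_p)$-representation in Emerton's sense, i.e.\ that the inclusion $GA(\alpha)_{\Gnnot-\an}\hookrightarrow GA(\alpha)$ is a bijection. Stability is automatic from the construction — $GA(\alpha)$ is by definition the space of $\Gnnot$-analytic vectors of a $\Gnnot(\Z_p)$-representation, and the space of analytic vectors is always a subrepresentation. For the "rigid analytic representation" property, I would invoke that $LA(\alpha)$ is admissible locally analytic (the Remark after the Liu diagram, citing Example 1.18 of \cite{Liu}), so that $\big(LA(\alpha)\big)_{\Gnnot-\an}$ is well-behaved; the idempotence $(U_{\Gnnot-\an})_{\Gnnot-\an} = U_{\Gnnot-\an}$, which holds quite generally for the $\Gnnot$-analytic-vectors functor on Fréchet spaces (Emerton, \cite[Sec. 3.3]{Emertonbook}), then gives exactly that $GA(\alpha)$ is a rigid analytic $\Gnnot(\Z_p)$-representation. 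The analogous statement for $LA(\beta)$ follows by replacing $\alpha$ with $\beta$ throughout.

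The main obstacle I anticipate is not any single deep step but the transport-of-structure for non-trivial $w$: one must be careful that the $w$-conjugated triangular decomposition of $I(1)$ is still compatible with the rigid analytic structure of $\GG(m)$ (equivalently, that $w\UU(m)w^{-1}$ is genuinely an affinoid subgroup with the expected coordinates), and that the functions in $\scriptC^\an(\UU(m),L)$ pulled back along the $w$-twisted action still satisfy estimates of the form $\val_p(a_l)+ml\to\infty$. This is exactly the content abstracted into Appendix A of \cite{Rayglobal}, so the honest work is to confirm that the hypotheses of that appendix are met here; once that is granted, everything else is formal.
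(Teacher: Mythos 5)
Your proposal matches the paper's argument: the paper likewise combines the Iwasawa decomposition \eqref{iwasawaDecomposition}, the Bruhat--Tits/Weyl decomposition \eqref{eq:decomposition Weyl}, Proposition \ref{prop:UmAnalytic} with its Corollary for the $w=\mathrm{Id}$ summand, and an appeal to Appendix A of \cite{Rayglobal} for the remaining Weyl orbits. Your added remarks on commutation of $(\cdot)_{\Gnnot-\an}$ with finite direct sums and on idempotence of the analytic-vectors functor (to justify stability and the ``rigid analytic representation'' property) are small but correct elaborations that the paper leaves implicit.
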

Exactly, by a similar argument, replacing $\alpha$ by $\beta$ we can find the space $GA(\beta)$
of $\Gnnot$-analytic vectors of $LA(\beta)$.

Recall the locally algebraic representation $\pi(\beta)$ with a closed $GL_2(\Q_p)$-equivariant embeddding into $LA(\beta)$ (cf. equation \eqref{embeddingsClosed}), where 
$$\pi(\beta)= \Sym^{k-2}L^2 \otimes_L \big(  \Ind_{B(\Q_p)}^{GL_2(\Q_p)} \unr(\beta^{-1}) \otimes \unr(p\alpha^{-1})   \big)_\sm$$
(cf. equation \eqref{pibeta}). 
As before, by Iwasawa decomposition and Bruhat-Tits decomposition (cf. \eqref{iwasawaDecomposition} and \eqref{eq:decomposition Weyl}), we have 
\begin{equation*}
\big(\Ind_{B(\Q_p)}^{GL_2(\Q_p)} \unr(\beta^{-1}) \otimes \unr(p\alpha^{-1})   \big)_{\sm}\cong \oplus_{w \in W}\big(\Ind_{P_w(\Z_p)}^{I(\Z_p)}(\unr(\beta^{-1}) \otimes \unr(p\alpha^{-1}))^w\big)_{\sm},
\end{equation*}
Similar to the case of locally analytic vectors considered before, the smooth vectors of the space $\Ind_{P_w(\Z_p)}^{I(\Z_p)}(\unr(\beta^{-1}) \otimes \unr(p\alpha^{-1}))^w$ can be identified with locally constant functions on  $wUw^{-1}$. Therefore, the $\UU(m)$-analytic subspace  of $\big(\Ind_{P_w(\Z_p)}^{I(\Z_p)}(\unr(\beta^{-1}) \otimes \unr(p\alpha^{-1}))^w\big)_{\sm}$ is the set of functions which are constant on $w\UU(m)(\Z_p)w^{-1}$ and locally constant on its complement space, i.e. $wUw^{-1} \backslash w\UU(m)(\Z_p)w^{-1}$; we denote this space by $C(\beta)_{w\UU(m)w^{-1}}$. (The `$C$' stands for constant on $w\UU(m)w^{-1}$ and `$\beta$' means we are dealing with the character corresponding to $\beta$). 
Then the following corollary is obvious, as the polynomials $\Sym^{k-2}L^2$ are always, of course, rigid analytic.

\begin{corollary}\label{cor:pibetaAnalytic}
Let $C(\beta)_{m}$ denote $\oplus_{w \in W}C(\beta)_{w\UU(m)w^{-1}}$. 
Then $\Gnnot$-analytic vectors of $\pi(\beta)$ are given by 
$$\big(\pi(\beta)\big)_{\Gnnot-\an} \cong \Sym^{k-2}L^2 \otimes \varprojlim_{m>n}C(\beta)_{m}.$$
\end{corollary}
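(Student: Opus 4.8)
The plan is to run the same three-step reduction already used for $LA(\alpha)$ in Theorem \ref{thm: Amalytic LA alpha beta}, with locally analytic functions replaced everywhere by smooth ones, and then to reinstate the $\Sym^{k-2}L^2$-factor at the end. First I would apply the Iwasawa decomposition \eqref{iwasawaDecomposition} and the Bruhat--Tits decomposition \eqref{eq:decomposition Weyl} to the smooth induction, giving the $I$-equivariant isomorphism
$$\big(\Ind_{B(\Q_p)}^{GL_2(\Q_p)} \unr(\beta^{-1}) \otimes \unr(p\alpha^{-1})\big)_{\sm} \cong \oplus_{w \in W}\big(\Ind_{P_w(\Z_p)}^{I(\Z_p)}(\unr(\beta^{-1}) \otimes \unr(p\alpha^{-1}))^w\big)_{\sm}.$$
Since the $\UU(m)$-analytic-vectors functor commutes with the finite direct sum over $W$, it suffices to treat one Weyl summand, and as in Proposition \ref{prop:UmAnalytic} each summand is realized, via restriction and the identification $f\left(\begin{smallmatrix}1&0\\z&1\end{smallmatrix}\right)\mapsto f(z)$, as a space of locally constant functions on $wUw^{-1}\cong\Z_p$.

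The key step is the smooth analogue of Proposition \ref{prop:UmAnalytic}: a locally constant $f$ lies in $\big(\Ind_{P_w(\Z_p)}^{I(\Z_p)}(\cdots)^w\big)_{\sm,\,\UU(m)-\an}$ if and only if $f$ is constant on the ball $w\UU(m)(\Z_p)w^{-1}$ and locally constant on its complement, i.e. $f\in C(\beta)_{w\UU(m)w^{-1}}$. For the forward direction, the argument of Proposition \ref{prop:UmAnalytic} (evaluating the orbit map at $1$ and using part (i) of Lemma \ref{lem:action}) forces $z\mapsto f(z)$ to be rigid analytic on $\UU(m)$; a rigid analytic function on the connected affinoid ball $w\UU(m)w^{-1}$ that is also locally constant must be constant there. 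For the converse I would run through the four element types of Lemma \ref{lem:action} exactly as in the proof of Proposition \ref{prop:UmAnalytic}, now noting that the characters $\unr(\beta^{-1})$ and $\unr(p\alpha^{-1})$ are unramified, hence trivial on $1+p^m\Z_p$, so that \emph{no} polynomial correction factor of $\Sym^{k-2}$-type arises and the estimates collapse: constancy on the ball is preserved, and one checks the orbit map stays in $\scriptC^{\an}(\UU(m),C(\beta)_{w\UU(m)w^{-1}})$.

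Passing to the $\sigma$-affinoid group $\Gnnot$ via $(-)_{\Gnnot-\an}\cong\varprojlim_{m>n}(-)_{\GG(m)-\an}$ from Section \ref{sec:rigidbasic} and summing over $w$ then yields $\big((\Ind_{B(\Q_p)}^{GL_2(\Q_p)} \unr(\beta^{-1}) \otimes \unr(p\alpha^{-1}))_{\sm}\big)_{\Gnnot-\an}\cong\varprojlim_{m>n}C(\beta)_m$. Finally I would restore the $\Sym^{k-2}L^2$-factor: since $\Sym^{k-2}L^2$ is a finite-dimensional algebraic representation of $GL_2$ it consists entirely of $\Gnnot$-analytic vectors, and for the diagonal action on a tensor product with such a finite-dimensional algebraic factor one has $\big(\Sym^{k-2}L^2\otimes_L W\big)_{\Gnnot-\an}\cong\Sym^{k-2}L^2\otimes_L W_{\Gnnot-\an}$ (using that the matrix coefficients of $\Sym^{k-2}L^2$ and their inverses are rigid analytic on each $\GG(m)$ to pass the finite-dimensional factor across the orbit map and the projective limit). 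Applying this with $W=(\Ind_{B(\Q_p)}^{GL_2(\Q_p)} \unr(\beta^{-1}) \otimes \unr(p\alpha^{-1}))_{\sm}$ gives $\big(\pi(\beta)\big)_{\Gnnot-\an}\cong\Sym^{k-2}L^2\otimes\varprojlim_{m>n}C(\beta)_m$, as claimed.

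The main obstacle I anticipate is the converse half of the key step: checking that the candidate functions — constant on $w\UU(m)(\Z_p)w^{-1}$, locally constant outside — genuinely have rigid analytic orbit maps on $\UU(m)$, and that this is uniform enough across $m>n$ that assembling the projective limit neither loses nor creates vectors. The saving grace relative to the $LA(\alpha)$ case is that smoothness reduces the analyticity condition on the ball to mere constancy, so the convergence estimates of Proposition \ref{prop:UmAnalytic} become trivial there; the only real content is the interaction between the ball and its locally constant complement, which is governed by the same Lemma \ref{lem:action} formulas and, all relevant characters being unramified, involves no new polynomial contributions.
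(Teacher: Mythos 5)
Your proof is correct and follows essentially the same route as the paper: Iwasawa plus Bruhat--Tits reduction, identification of the smooth induction with locally constant functions on $wUw^{-1}$, the observation that a rigid analytic function on the ball which is also locally constant is constant (which pins down the $\UU(m)$-analytic subspace as $C(\beta)_{w\UU(m)w^{-1}}$), and the fact that the algebraic factor $\Sym^{k-2}L^2$ passes freely through $(-)_{\Gnnot-\an}$. The only difference is that you spell out the forward and converse directions of the key step in detail where the paper simply points to the analogy with Proposition~\ref{prop:UmAnalytic} and calls the result obvious.
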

Replacing $\alpha$ by $\beta$ we obtain $\big(\pi(\alpha)\big)_{\Gnnot-\an} \cong \Sym^{k-2}L^2 \otimes \varprojlim_{m>n}C(\alpha)_{m}.$ 
From the discussion after equation $4.7$ of \cite[page 54]{Liu}, we note that 
\begin{equation}\label{eq:dualmap}
\big(LA(\alpha) \oplus_{\pi(\beta)} LA(\beta)\big)^\ast \cong \ker\Big(LA(\alpha)^\ast \oplus LA(\beta)^\ast \rightarrow \pi(\beta)^\ast\Big).
\end{equation}

But the locally analytic principal series $LA(\alpha)$ or $LA(\beta)$ are spaces of compact type, hence reflexive \cite[Prop. 16.10]{NFA}. Also closed subspace (and quotient by a closed subspace) of a reflexive space is  reflexive. Therefore, $\pi(\beta)$ is reflexive and  note that  the map $LA(\alpha)^\ast \oplus LA(\beta)^\ast \rightarrow \pi(\beta)^\ast$ in \eqref{eq:dualmap} is a surjection, since it comes from dualizing the natural injection $\pi(\beta) \hookrightarrow LA(\beta) \hookrightarrow LA(\alpha) \oplus LA(\beta)$.  Therefore, dualizing equation \eqref{eq:dualmap} and using Proposition \ref{prop:exactness}, we get the  following Theorem.
\begin{theorem}\label{thm:mainAnalytic}
Suppose $V=V(\alpha,\beta)$ is crystalline, irreducible, Frobenius semisimple Galois representation with distinct Hodge-Tate weights. Let $\mathbf{B}(V)_{\la}$ be the locally analytic vectors of the $GL_2(\Q_p)$-representation  $\mathbf{B}(V)$ associated to $V$ via $p$-adic local Langlands constructed by Berger and Breuil. Then,
$$\big(\mathbf{B}(V)_{\la}\big)_{\Gnnot-\an} \cong \Coker\Big( \pi(\beta)_{\Gnnot-\an} \hookrightarrow LA(\alpha)_{\Gnnot-\an} \oplus LA(\beta)_{\Gnnot-\an}  \Big)  $$
is a rigid-analytic $\Gnnot(\Z_p)$-representation. Furthermore,
\begin{enumerate}[(i)]
	\item $\pi(\beta)_{\Gnnot-\an} \cong \Sym^{k-2}L^2 \otimes \varprojlim_{m>n}C(\beta)_{m}$,
	\item   $ LA(\alpha)_{\Gnnot-\an} \cong GA(\alpha)$  and $ LA(\beta)_{\Gnnot-\an} \cong GA(\beta)$.
\end{enumerate}
In particular, it follows that $\big(\mathbf{B}(V)_{\la}\big)_{\Gnnot-\an} \neq 0$.
\end{theorem}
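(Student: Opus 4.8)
The plan is to realize $\big(\mathbf{B}(V)_{\la}\big)_{\Gnnot-\an}$ as the value of the exact functor $U\mapsto U_{\Gnnot-\an}$ on a short exact sequence presenting $\mathbf{B}(V)_{\la}$, and then to substitute the explicit descriptions already obtained in Theorem \ref{thm: Amalytic LA alpha beta} and Corollary \ref{cor:pibetaAnalytic}. First I would dualize the identification \eqref{eq:dualmap}: since the map $LA(\alpha)^\ast\oplus LA(\beta)^\ast\to\pi(\beta)^\ast$ is surjective, being the dual of the closed embedding $\pi(\beta)\hookrightarrow LA(\alpha)\oplus LA(\beta)$, equation \eqref{eq:dualmap} together with Liu's isomorphism $\mathbf{B}(V)_{\la}\cong LA(\alpha)\oplus_{\pi(\beta)}LA(\beta)$ gives a strict exact sequence $0\to(\mathbf{B}(V)_{\la})^\ast\to LA(\alpha)^\ast\oplus LA(\beta)^\ast\to\pi(\beta)^\ast\to 0$ of nuclear Fréchet spaces (strictness being automatic by the open mapping theorem, as all the terms are Fréchet). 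Applying the strong dual functor, which is an exact anti-equivalence in this setting, and using reflexivity of $\pi(\beta)$, $LA(\alpha)$, $LA(\beta)$ and $\mathbf{B}(V)_{\la}$, this yields a short exact sequence $0\to\pi(\beta)\to LA(\alpha)\oplus LA(\beta)\to\mathbf{B}(V)_{\la}\to 0$ of $GL_2(\Q_p)$-representations (this is of course just the defining presentation of the amalgamated sum). Restricting the action to $GL_2(\Z_p)$, all three terms are admissible locally analytic $GL_2(\Z_p)$-representations: $LA(\alpha)$ and $LA(\beta)$ by the Remark following \eqref{eq:LAalpha} (admissibility over $GL_2(\Q_p)$ restricts to admissibility over the compact open subgroup $GL_2(\Z_p)$), $\pi(\beta)$ as a closed subrepresentation, and $\mathbf{B}(V)_{\la}$ as a quotient.

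Next I would invoke Proposition \ref{prop:exactness}: the functor $U\mapsto U_{\Gnnot-\an}$ is exact in the strong sense on admissible locally analytic $GL_2(\Z_p)$-representations, so
\[0\longrightarrow\pi(\beta)_{\Gnnot-\an}\longrightarrow(LA(\alpha)\oplus LA(\beta))_{\Gnnot-\an}\longrightarrow\big(\mathbf{B}(V)_{\la}\big)_{\Gnnot-\an}\longrightarrow 0\]
is a strict exact sequence of nuclear Fréchet spaces. Since $U\mapsto U_{\Gnnot-\an}$ visibly commutes with finite direct sums, $(LA(\alpha)\oplus LA(\beta))_{\Gnnot-\an}\cong LA(\alpha)_{\Gnnot-\an}\oplus LA(\beta)_{\Gnnot-\an}$, whence $\big(\mathbf{B}(V)_{\la}\big)_{\Gnnot-\an}\cong\Coker\big(\pi(\beta)_{\Gnnot-\an}\hookrightarrow LA(\alpha)_{\Gnnot-\an}\oplus LA(\beta)_{\Gnnot-\an}\big)$. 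That this cokernel is a rigid analytic $\Gnnot(\Z_p)$-representation follows since it equals $\big(\mathbf{B}(V)_{\la}\big)_{\Gnnot-\an}$, the $\Gnnot$-analytic vectors of an admissible locally analytic representation, and $U\mapsto U_{\Gnnot-\an}$ is idempotent. Parts (i) and (ii) are then exactly Corollary \ref{cor:pibetaAnalytic} and Theorem \ref{thm: Amalytic LA alpha beta}, substituted into the cokernel.

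Finally, for the non-vanishing $\big(\mathbf{B}(V)_{\la}\big)_{\Gnnot-\an}\neq 0$, by exactness it is enough to show that the injection $\pi(\beta)_{\Gnnot-\an}\hookrightarrow LA(\alpha)_{\Gnnot-\an}\oplus LA(\beta)_{\Gnnot-\an}$ is not surjective; composing with the second projection, which is the inclusion $\pi(\beta)_{\Gnnot-\an}\hookrightarrow LA(\beta)_{\Gnnot-\an}\cong GA(\beta)$ induced by the closed embedding $\pi(\beta)\hookrightarrow LA(\beta)$, it suffices to exhibit an element of $GA(\beta)$ not in the image of $\pi(\beta)_{\Gnnot-\an}$. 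Using the explicit descriptions, the $w=\mathrm{Id}$ summand of $GA(\beta)$ contains the monomial $z\mapsto z^{k-1}$ (a polynomial, hence rigid analytic on $\UU(n)^{\circ}$ and locally analytic on $U$), whereas the image of $\pi(\beta)_{\Gnnot-\an}$ in that summand consists of \emph{locally algebraic} functions: being built from the factor $\Sym^{k-2}L^2$ tensored with locally constant functions, they restrict on any sufficiently small subdisc of $U$ to a polynomial of degree $\leq k-2$ in $z$; since $k-1>k-2$ (note $k\geq 2$ as the Hodge--Tate weights $0,k-1$ are distinct), $z^{k-1}$ is not among them, so the cokernel is nonzero. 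The step I expect to require the most care is precisely this last comparison — pinning down, in the function-theoretic realization of $LA(\beta)$, that the image of the locally algebraic subspace $\pi(\beta)$ consists of functions of local degree bounded by $k-2$ — together with the mild technical points that the dualized sequence is strict exact (so reflexivity applies) and that the three representations are admissible as $GL_2(\Z_p)$-representations so that Proposition \ref{prop:exactness} can be used; everything else is formal.
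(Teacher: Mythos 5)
Your argument for the main isomorphism
\[
\big(\mathbf{B}(V)_{\la}\big)_{\Gnnot-\an} \cong \Coker\Big( \pi(\beta)_{\Gnnot-\an} \hookrightarrow LA(\alpha)_{\Gnnot-\an} \oplus LA(\beta)_{\Gnnot-\an}\Big)
\]
follows the paper's route exactly: dualize \eqref{eq:dualmap} using reflexivity to recover the short exact presentation $0\to\pi(\beta)\to LA(\alpha)\oplus LA(\beta)\to\mathbf{B}(V)_{\la}\to 0$ of the amalgamated sum, note all three terms are admissible over $GL_2(\Z_p)$, and apply the strong exactness of $U\mapsto U_{\Gnnot-\an}$ from Proposition \ref{prop:exactness}. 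Parts (i) and (ii) are, as you say, direct substitutions of Corollary \ref{cor:pibetaAnalytic} and Theorem \ref{thm: Amalytic LA alpha beta}.

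Where you genuinely diverge from the paper is the non-vanishing. The paper's argument is structural and uses the same exactness once more: since $\pi(\beta)$ embeds into $LA(\alpha)\oplus LA(\beta)$ through a map whose $LA(\beta)$-component is already injective, the composite $\pi(\alpha)\hookrightarrow LA(\alpha)\hookrightarrow LA(\alpha)\oplus LA(\beta)\to\mathbf{B}(V)_{\la}$ is injective; applying $(-)_{\Gnnot-\an}$ and using the isomorphism $\pi(\alpha)\cong\pi(\beta)$ via $I_{\sm}$ (Lemma \ref{lemma:intertwining}), one finds a copy of $\pi(\alpha)_{\Gnnot-\an}\cong\Sym^{k-2}L^2\otimes\varprojlim_{m>n}C(\alpha)_m\neq 0$ sitting inside the cokernel. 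Your argument is instead computational: you observe that if $\pi(\beta)_{\Gnnot-\an}\to GA(\alpha)\oplus GA(\beta)$ were surjective then so would be its second coordinate $j_\beta$, and then you exhibit the monomial $z^{k-1}$ in the $w=\mathrm{Id}$ block of $GA(\beta)$, which cannot lie in the image since $\pi(\beta)_{\Gnnot-\an}$ lands in locally polynomial functions of local degree $\leq k-2$. Both arguments are valid; yours buys an explicit witness for the non-vanishing at the cost of having to unwind the function-theoretic realization of the closed embedding $\pi(\beta)\hookrightarrow LA(\beta)$ (locally algebraic vectors are locally polynomial of degree $\leq k-2$), while the paper's version is shorter and avoids any comparison of degrees by again exploiting exactness of $(-)_{\Gnnot-\an}$ on an injection. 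The paper's argument has the additional virtue of producing a concrete nonzero subrepresentation $\pi(\alpha)_{\Gnnot-\an}$ of $\big(\mathbf{B}(V)_{\la}\big)_{\Gnnot-\an}$, rather than only a witness class.
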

The fact that $\big(\mathbf{B}(V)_{\la}\big)_{\Gnnot-\an} \neq 0$ is obvious because we note that $\pi(\beta)$ and $\pi(\alpha)$ are isomorphic via the intertwining operator $I_{\sm}$ (cf. Lemma \ref{lemma:intertwining}) and we note that $\pi(\alpha)$ (resp. $\pi(\beta)$) are embedded into $LA(\alpha)$ (resp. $LA(\beta)$) (see \eqref{embeddingsClosed}).
But in Theorem \ref{thm:mainAnalytic}, we have only cut out the direct sum of $\Gnnot$-analytic vectors of $LA(\alpha) \oplus LA(\beta)$ by the $\Gnnot$-analytic vectors of $\pi(\beta)$. In particular, an isomorphic copy of $\pi(\alpha)_{\Gnnot-\an}$ certainly lives inside $\big(\mathbf{B}(V)_{\la}\big)_{\Gnnot-\an}$.

\section{Langlands base change of rigid analytic vectors  and future questions}\label{sec:basechange}
Let $K_0$ be a finite extension of $\Q_p$ and $K$ be an extension of $K_0$ of degree $r$.  Given a formal $\OO_{ K}$-scheme $\XX_{\OO_{ K}}$, Bertapelle constructs a Weil restriction functor which associates to $\XX_{\OO_{ K}}$ another formal scheme over $\OO_{K_0}$ \cite{Bertapelle1}.
Suppose $\XX_{K}$  is the rigid analytic space associated to the formal scheme $\XX_{\OO_{ K}}$. This gives a Weil restriction functor which associates to $\XX_{K}$ another rigid analytic space $\mathfrak{R}_{K/K_0}\XX_{K}$ over $K_0$ \cite[Prop. 1.8]{Bertapelle1}. 
We fix a choice of a basis of $K$ over $K_0$. In the following Lemma, we recall how Weil restriction behaves with affinoid closed unit balls. 
\begin{lemma}\cite[Lemma 1.1]{Clozel_global_II_published}
	Suppose $K$ is unramified over $K_0$. Let $\mathds{B}^1_K$ be the affinoid closed unit ball over $K$. Then,  $\frakRKKnot \mathds{B}^1_K$ is isomorphic to the $r$-th power of  $\mathds{B}_{K_0}$. Therefore,   $\frakRKKnot (\mathds{B}^1_K)^d \cong (\mathds{B}_{K_0})^{dr}$ as rigid analytic spaces.
\end{lemma}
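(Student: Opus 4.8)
The plan is to combine the universal property of Weil restriction with the fact that an unramified extension of complete discretely valued fields admits an \emph{orthonormal} integral basis, so that a closed unit disc over $K$, written out in such a basis, literally becomes a unit polydisc over $K_0$.

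First I would recall, following \cite[Prop.~1.8]{Bertapelle1}, that $\frakRKKnot\mathds{B}^1_K$ is the rigid generic fibre of the formal Weil restriction $\mathfrak{R}_{\OO_K/\OO_{K_0}}$ applied to the formal unit disc $\mathrm{Spf}\,\OO_K\langle T\rangle$, and that $\mathfrak{R}_{\OO_K/\OO_{K_0}}$ is right adjoint to base change $\mathfrak{Y}\mapsto\mathfrak{Y}\times_{\mathrm{Spf}\,\OO_{K_0}}\mathrm{Spf}\,\OO_K$ on admissible formal $\OO_{K_0}$-schemes. Since $K/K_0$ is finite, $\OO_K$ is free of rank $r$ over $\OO_{K_0}$, and since $K/K_0$ is \emph{unramified} we may take the chosen basis $e_1,\dots,e_r$ of $K/K_0$ to lie in $\OO_K$ and to be an orthonormal $\OO_{K_0}$-basis of $\OO_K$; equivalently $\{e_i\}$ is an orthonormal $B$-basis of $B\,\widehat{\otimes}_{K_0}K$ for every $K_0$-affinoid algebra $B$ (stability of orthonormal bases under completed base change, cf.\ \cite{NFA}). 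In particular the subring of power-bounded elements splits as $(B\,\widehat{\otimes}_{K_0}K)^{\circ}=B^{\circ}\,\widehat{\otimes}_{\OO_{K_0}}\OO_K=\bigoplus_i B^{\circ}e_i$.

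Next I would run the universal property. Writing $T=\sum_{i=1}^r T_i e_i$ with new indeterminates $T_i$ over $\OO_{K_0}$, a continuous $\OO_K$-algebra map $\OO_K\langle T\rangle\to S\otimes_{\OO_{K_0}}\OO_K$ is exactly the datum of the image of $T$, i.e.\ of an element $\sum_i s_ie_i$ with $s_i\in S$; hence $\mathfrak{R}_{\OO_K/\OO_{K_0}}\mathrm{Spf}\,\OO_K\langle T\rangle$ represents $S\mapsto S^r$ on adic $\OO_{K_0}$-algebras, so by Yoneda it equals $\mathrm{Spf}\,\OO_{K_0}\langle T_1,\dots,T_r\rangle$, whose generic fibre is $\mathrm{Sp}\,K_0\langle T_1,\dots,T_r\rangle=(\mathds{B}^1_{K_0})^r$. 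One can cross-check this at the level of points: for any $K_0$-affinoid $\mathrm{Sp}(B)$, the adjunction for the rigid Weil restriction together with the splitting above gives
\begin{align*}
\Hom_{K_0}\!\bigl(\mathrm{Sp}(B),\frakRKKnot\mathds{B}^1_K\bigr)
&=\mathds{B}^1_K\!\bigl(\mathrm{Sp}(B\,\widehat{\otimes}_{K_0}K)\bigr)
=\bigl(B\,\widehat{\otimes}_{K_0}K\bigr)^{\circ}\\
&=\bigoplus_{i=1}^r B^{\circ}e_i
=\bigl(B^{\circ}\bigr)^r
=\Hom_{K_0}\!\bigl(\mathrm{Sp}(B),(\mathds{B}^1_{K_0})^r\bigr),
\end{align*}
naturally in $B$. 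Finally, being a right adjoint, Weil restriction commutes with finite products, so $\frakRKKnot\bigl((\mathds{B}^1_K)^d\bigr)\cong\bigl(\frakRKKnot\mathds{B}^1_K\bigr)^d\cong(\mathds{B}^1_{K_0})^{dr}$, which is the second assertion.

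I expect the only genuine point to be the orthonormality of the integral basis and the fact that it survives the completed base change $B\mapsto B\,\widehat{\otimes}_{K_0}K$, so that the power-bounded subring really is $\bigoplus_i B^{\circ}e_i$ and not some strictly larger ring. This is where the unramified hypothesis is used in an essential way: a similar bookkeeping for a ramified $K/K_0$ with the orthogonal basis $1,\varpi,\dots,\varpi^{r-1}$ produces factors that are discs of radii $|\varpi|^{-j}$, which lie outside $|K_0^{\times}|$, and the clean identification with a unit polydisc over $K_0$ breaks down; so the hypothesis cannot simply be removed from the argument above.
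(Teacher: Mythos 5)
The paper does not prove this lemma; it is quoted directly from Clozel \cite[Lemma 1.1]{Clozel_global_II_published}, so there is no internal proof to compare against. Your approach is the right one, and your diagnosis of where the unramified hypothesis is used is accurate.

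The one genuine gap is the step $(B\widehat{\otimes}_{K_0}K)^{\circ}=B^{\circ}\widehat{\otimes}_{\OO_{K_0}}\OO_K$, which you yourself flag. It does not follow just from stability of orthonormal bases under completed base change, because $(-)^{\circ}$ is cut out by the supremum \emph{seminorm} rather than the algebra norm, and these two unit balls can differ (e.g.\ $B=K_0[\epsilon]/(\epsilon^2)$). The inclusion $\supseteq$ is clear; for $\subseteq$, one place unramifiedness enters cleanly is via the Galois action: $\Gamma=\Gal(K/K_0)$ acts on $C:=B\widehat{\otimes}_{K_0}K$ by $K_0$-algebra automorphisms, hence preserves the supremum seminorm and $C^{\circ}$, and $B\cap C^{\circ}=B^{\circ}$ because $\mathrm{Max}(C)\to\mathrm{Max}(B)$ is surjective. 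Given $x=\sum_i b_ie_i\in C^{\circ}$, each $\gamma(x)=\sum_ib_i\gamma(e_i)\in C^{\circ}$, and the matrix $(\gamma(e_i))_{\gamma,i}$ lies in $\GL_r(\OO_K)$ because its determinant squared is $\disc_{\OO_K/\OO_{K_0}}(e_1,\dots,e_r)\in\OO_{K_0}^{\times}$ --- a unit precisely because $K/K_0$ is unramified; inverting it over $\OO_K\subseteq C^{\circ}$ gives $b_i\in B\cap C^{\circ}=B^{\circ}$. This also sharpens your remark about the ramified case: already for $B=K$ ramified, the idempotents of $B\otimes_{K_0}K$ are power-bounded but their coordinates with respect to $1,\varpi,\dots,\varpi^{r-1}$ involve $\disc^{-1}$, which is not integral. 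Finally, note that the formal-scheme computation $\mathfrak{R}_{\OO_K/\OO_{K_0}}\mathrm{Spf}\,\OO_K\langle T\rangle\cong\mathrm{Spf}\,\OO_{K_0}\langle T_1,\dots,T_r\rangle$ is valid for any finite $K/K_0$, since $\OO_K$ is always free over $\OO_{K_0}$; the unramified hypothesis is needed only to ensure the generic fibre represents the rigid Weil-restriction functor, which is exactly what your ``cross-check'' establishes --- so the cross-check is really the core of the argument rather than a sanity check.
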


Consider a rigid analytic group $\GG_K$ over $K$ isomorphic as a rigid analytic space to $ (\mathds{B}^1_K)^d $. 	Assume that $\GG_K$ is actually defined over $K_0$, i.e., is obtained by extension of scalars from $K_0$. Then $\scriptC^{\an}(\GG_K,K) \cong \scriptC^{\an}(\GG_{K_0}, K_0) \otimes K$. There is a natural comultiplication map $$m: \scriptC^{\an}(\GG_{K},K) \rightarrow \scriptC^{\an}(\GG_{K},K) \text{ } \widehat{\otimes} \text{ }  \scriptC^{\an}(\GG_{K},K)$$ which is defined by group multiplication in $\GG_K$. This comultiplication map  extends from the natural comultiplication map defined for $\GG_{K_0}$ and  defines a natural comultiplication map $$\mathfrak{R}(m): \scriptC^{\an}(\resRKKnot \GG_K, K)  \rightarrow \scriptC^{\an}(\resRKKnot \GG_K, K) \text{ }  \widehat{\otimes} \text{ }  \scriptC^{\an}(\resRKKnot \GG_K, K).$$
Here $\scriptC^{\an}(\resRKKnot \GG_K, K) = \scriptC^{\an}(\resRKKnot \GG_K, K_0) \otimes K$. Suppose $K$ is Galois over $K_0$. Therefore, the Galois group $\Gal(K/K_0)$ acts naturally on $\GG_K$ by $g$-linear automorphisms of the Tate algebra and acts on $\resRKKnot \GG_K$ by $K_0$-automorphisms. Then, Clozel showed the following Lemma relating rigid analytic functions on $\GG_K$ with those of $\resRKKnot \GG_K$ \cite[Sec. 1]{Clozel_global_II_published}.
\begin{lemma}\label{lemma: Clozel}
	\begin{enumerate}[(i)]
		\item There exists a natural map $b_1: \scriptC^{\an}(\GG_{K_0}, K) \rightarrow \scriptC^{\an}(\resRKKnot \GG_K, K)$ which commutes with natural comultiplication maps.
		\item There is a  natural map $$b= \prod_{g \in \Gal(K/K_0)}b_1^g: \scriptC^{\an}(\GG_{K_0}, K) \rightarrow \scriptC^{\an}(\resRKKnot \GG_K, K)$$ which commutes with comultiplication.
		\item Part (ii) induces an isomorphism  $$\scriptC^{\an}(\resRKKnot \GG_K, K) \cong \widehat{\otimes}_{g \in \Gal(K/K_0)}  \scriptC^{\an}(\GG_{K}, K).$$  The map $b$ is a tensor product 
		$b= \widehat{\otimes}_{g \in \Gal(K/K_0)} b_1^g. $  Its $g$-component sends a power series $\sum a_l\underline{x}^m, \underline{x}=(x_1,...,x_d)$ to the series $\sum a_lg(\underline{x})^m.$
		\item The maps $b$ and $b_1$ are continuous in canonical topologies (it respects sup norm on the Tate algebra of power series). 
	\end{enumerate}
\end{lemma}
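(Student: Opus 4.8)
The plan is to derive all four parts from the universal property of Weil restriction, together with the explicit coordinate picture furnished by the preceding Lemma (\cite[Lemma 1.1]{Clozel_global_II_published} and the rigid analytic formalism of \cite{Bertapelle1}).

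The first step is to record that the Weil restriction functor $\resRKKnot$ is right adjoint to base change along $K_0\to K$; in particular it carries a counit, a morphism of rigid analytic $K$-groups
\[
\epsilon\colon\bigl(\resRKKnot\GG_K\bigr)\times_{K_0}K\longrightarrow\GG_K,
\]
which is a group homomorphism because $\resRKKnot$ sends group objects to group objects compatibly with $\epsilon$. Since $\GG_K$ is \emph{defined over $K_0$}, say $\GG_K=\GG_{K_0}\times_{K_0}K$, one has the identifications $\scriptC^{\an}(\GG_K,K)=\scriptC^{\an}(\GG_{K_0},K_0)\otimes_{K_0}K=\scriptC^{\an}(\GG_{K_0},K)$ and likewise $\scriptC^{\an}\bigl((\resRKKnot\GG_K)\times_{K_0}K,K\bigr)=\scriptC^{\an}(\resRKKnot\GG_K,K)$. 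Pulling functions back along $\epsilon$ then yields a map
\[
b_1:=\epsilon^{\ast}\colon\scriptC^{\an}(\GG_{K_0},K)\longrightarrow\scriptC^{\an}(\resRKKnot\GG_K,K),
\]
which commutes with the comultiplications — these are the pullbacks of group multiplication and $\epsilon$ is a group map — and which is contractive for the sup norms on the Tate algebras involved, as any morphism of affinoid-locally affinoid spaces induces such a map on functions. This gives part (i) and the $b_1$ half of part (iv).

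For (ii) and (iii) I would invoke the standard decomposition of Weil restriction along a finite Galois extension: putting $\Gamma=\Gal(K/K_0)$, there is a canonical isomorphism of rigid analytic $K$-groups
\[
\bigl(\resRKKnot\GG_K\bigr)\times_{K_0}K\;\xrightarrow{\;\sim\;}\;\prod_{g\in\Gamma}{}^{g}\GG_K,
\]
whose $g$-th projection is the $g$-twist ${}^{g}\epsilon$ of the counit. Since $\GG_K$ descends to $\GG_{K_0}$, each conjugate ${}^{g}\GG_K$ is canonically identified with $\GG_K$, the identification being "apply $g$" in coordinates over $K_0$, so the $g$-th projection becomes a map $b_1^{g}\colon\scriptC^{\an}(\GG_{K_0},K)\to\scriptC^{\an}(\resRKKnot\GG_K,K)$ equal to $b_1$ composed with this Galois twist. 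Passing to functions turns the finite product of rigid spaces into a completed tensor product of Banach algebras, so
\[
\scriptC^{\an}(\resRKKnot\GG_K,K)\;\cong\;\widehat{\bigotimes}_{g\in\Gamma}\scriptC^{\an}(\GG_K,K),
\]
under which the map $b$ is $\widehat{\bigotimes}_{g\in\Gamma}b_1^{g}$ (each $b_1^{g}$ landing in its $g$-th tensor slot). The coordinate formula for the $g$-component — that it carries $\sum a_l\underline{x}^{m}$ to $\sum a_l g(\underline{x})^{m}$ — then falls out by unwinding the isomorphism $\resRKKnot(\mathds{B}^1_K)^d\cong(\mathds{B}_{K_0})^{dr}$ of the preceding Lemma and tracking how $\Gamma$ acts on the $dr$ coordinate functions. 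Each $b_1^{g}$ commutes with comultiplication (the $g$-action is by group automorphisms and $\epsilon$ is a group map), hence so does $b$; this settles (ii) and (iii). Finally, every map in sight is a pullback along a morphism of (locally) affinoid spaces, and the completed tensor product carries the tensor-product norm, so $b$, like $b_1$, is contractive for the sup norms, which completes (iv).

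I expect the one genuine obstacle to be the bookkeeping in (iii): one has to check that the comultiplication on $\scriptC^{\an}(\resRKKnot\GG_K,K)$ coming from group multiplication in the Weil restriction corresponds, under the displayed isomorphism, to the tensor-product comultiplication on $\widehat{\bigotimes}_{g}\scriptC^{\an}(\GG_K,K)$, and that the Galois action on the explicit polydisc coordinates is the asserted one. This is exactly where the hypotheses are spent: $\GG_K$ being defined over $K_0$ collapses all the conjugates ${}^{g}\GG_K$ to $\GG_K$, and $K/K_0$ being unramified lets the preceding Lemma apply, so that the Weil restriction is again a polydisc with manageable coordinates.
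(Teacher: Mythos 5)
The paper does not prove this lemma; it is quoted verbatim from Clozel \cite[Sec. 1]{Clozel_global_II_published}, so there is no in-paper argument to compare against. Judged on its own, your structural proof is sound and self-consistent: the counit $\epsilon\colon(\resRKKnot\GG_K)\times_{K_0}K\to\GG_K$ of the Weil-restriction adjunction gives $b_1$ as a pullback (hence compatible with comultiplication and contractive for sup norms), and the Galois decomposition $(\resRKKnot\GG_K)\times_{K_0}K\cong\prod_{g\in\Gal(K/K_0)}{}^{g}\GG_K$, collapsed using the $K_0$-form of $\GG_K$, yields the completed tensor decomposition in (iii) and, on unwinding the polydisc coordinates of the preceding lemma, the stated series formula for the $g$-components. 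This is a cleaner and more conceptual route than what Clozel does: in the source, the argument is carried out explicitly on the Tate algebra of the polydisc, with the isomorphism in (iii) built by hand from the coordinate substitution $x_i\mapsto\sum_j e_j y_{ij}$ and its Galois twists, and continuity verified directly on power series. Your version buys generality and brevity (it would go through for any smooth $K_0$-form, not just a coordinatized polydisc), at the cost of having to invoke the Galois product decomposition of Weil restriction in the rigid setting, which is not spelled out in \cite{Bertapelle1} and which you should at least flag as needing the unramified (so polydisc-preserving) hypothesis and the affinoid-by-affinoid reduction to scheme-theoretic Weil restriction.

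One small point worth tightening: you should make explicit that the identification $\scriptC^{\an}\bigl((\resRKKnot\GG_K)\times_{K_0}K,K\bigr)\cong\scriptC^{\an}(\resRKKnot\GG_K,K_0)\otimes_{K_0}K$ holds (and agrees with the paper's definition of the left-hand side) because $[K:K_0]$ is finite, so the base-change Tate algebra is an ordinary, not merely completed, tensor product; this is what lets the $g$-th projection $b_1^{g}$ land in a single tensor slot of $\widehat{\bigotimes}_{g}\scriptC^{\an}(\GG_K,K)$ and makes the formula $b=\widehat{\bigotimes}_g b_1^g$ meaningful.
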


Now suppose that $U$ is a rigid analytic  representation of $\GG_{K}(K_0)$ over $L$ where $\iota: K \hookrightarrow  L$ is the natural inclusion. Then, by \cite[Prop. 3.1]{Clozel_global_II_published}, $U$ extends naturally to a rigid analytic representation 
of $\GG_K(K)$. If $g \in \Gal(K/K_0)$, we have the injection $\iota \circ g: K \rightarrow L$. We write $U^g$ for the representation of $\GG_K(K)$ associated to $\iota \circ g$. It is $K$-analytic for $\iota \circ g$. Then, 
Clozel defines the  base change of $U$ compatible with Langlands functoriality.
\begin{definition}\label{def:basechange}\cite[Defn. 3.2]{Clozel_global_II_published}
	The Langlands base change of $U$ is the globally analytic representation of $\resRKKnot \GG_K(K)$ on $W= \widehat{\otimes}_{g \in \Gal(K/K_0)} U^{g}$.
\end{definition}
There are several future project that branches out from Theorem \ref{thm:mainAnalytic}, the main result of this article. We conclude this section by mentioning some of them. 
The first step is to  generalize Weil restriction functor for the case of rigid analytic $\sigma$-affinoid representations. Let us denote this functor by $\resRKKnot \GG_K^\circ$. This functor has to be constructed in such a way that it satisfies the natural  analogue of Clozel's result (cf. Lemma \ref{lemma: Clozel}). In particular, we must show that 
$$\scriptC^{\an}(\resRKKnot \GG_K^\circ, K) \cong \widehat{\otimes}_{g \in \Gal(K/K_0)}  \scriptC^{\an}(\GG_{K}^\circ, K).$$ 
Then we can construct the Langlands base change of $\big(\mathbf{B}(V)_{\la}\big)_{\Gnnot-\an} $   analogous to that of Clozel's, by showing that the corresponding  tensor product in Definition \ref{def:basechange} will be a globally analytic representation over the Weil restriction of sigma affinoid rigid analytic spaces. This is a future  project  and we will solve this in  a separate paper. 
\subsection{Why $\sigma$-affinoids and not affinoids?}\label{sec: whySigmaAffinoid}
In this article, the reader will notice that we compute rigid analytic vectors for $\sigma$-affinoid rigid analytic groups (open balls) instead of affinoids (closed balls). It  is a natural question to ask, why such a choice?
This is mainly due to Proposition \ref{prop:exactness}, that is, 	the functor $U \longmapsto U_{\Gnnot-\an}$ on the category of admissible locally analytic $GL_2(\Z_p)$-representations is exact. It is still an open question to study if $U \longmapsto U_{\GG(n)-\an}$ is exact. This seems to require some new idea. 
Further, the distribution algebra $\DanGnnotL$ is a coherent ring (Section \ref{sec:wideopen}) which makes it easy to study modules over this algebra. However, the distribution algebra of the affinoid group $\mathcal{D}^{\an}(\GG(n),L)$ is not so well behaved. More precisely, Clozel shows that the analytic distribution algebra of affinoids does not respect cartesian product,  $$\mathcal{D}^{\an}(X \times Y, L) \ncong \mathcal{D}^{\an}(X,L) \widehat{\otimes} \mathcal{D}^{\an}(Y,L),$$ where $X$ and $Y$ are rigid analytic spaces (cf. \cite[Appendix A.1]{Clozel_global_II_published}). Furthermore, $\mathcal{D}^{\an}(\GG(n),L)$ is not noetherian \cite[Appendix A.2]{Clozel_global_II_published}. Therefore, it remains open to determine the algebra structure of $\mathcal{D}^{\an}(\GG(n),L)$. 
\subsection{Representations non-crystalline.} In this article, we show the existence of rigid analytic vectors in $\mathbf{B}(V)_{\la}$ where $V$ is crystalline. A natural question is to ask if there exists rigid analytic vectors for non-crystalline  representations, that is, $V=V(s)$ when $s \in \scriptS_\ast^{\mathrm{ng}}  \sqcup \scriptS_\ast^{\mathrm{st}} \sqcup \scriptS_\ast^{\mathrm{ord}} \sqcup \scriptS_\ast^{\mathrm{ncl}}.$ This is too an open question which needs further research.

\section*{Achnowledgements}
The author heartily thanks Laurent Clozel for introducing him to the theory of rigid analytic $p$-adic representations and for suggesting to look at the problem discussed in this article.  The author also thanks him for suggesting the open question presented in this article. The author also benefited from numerous conversations with Filippo Nuccio on his visit to the University of British Columbia during September 2019 - August 2020.
The author is also grateful to Matthew Emerton for bringing his attention to Emerton's paper \cite{EmertonJac}.

\bibliographystyle{alpha}
\bibliography{Analytic}
\end{document}